\let\origsection=\section \def\section{\@ifstar{\origsection*}{\mysection}} 
\def\mysection{\@startsection{section}{1}\z@{.7\linespacing\@plus\linespacing}{.5\linespacing}{\normalfont\scshape\centering\S}}
\renewcommand{\PrintDOI}[1]{\doi{#1}}
\numberwithin{equation}{section}
\numberwithin{figure}{section}
\def\rmlabel{\upshape({\itshape \roman*\,})}
\let\polishlcross=\l
\def\l{\ifmmode\ell\else\polishlcross\fi}
\def\qand{\quad\text{and}\quad}
\def\qqand{\qquad\text{and}\qquad}
\def\paragraph#1{\noindent\textbf{#1.}\enspace}
\let\emptyset=\varnothing
\let\setminus=\smallsetminus
\let\sm=\setminus
\def\moverlay{\mathpalette\mov@rlay}
\def\mov@rlay#1#2{\leavevmode\vtop{   \baselineskip\z@skip \lineskiplimit-\maxdimen
   \ialign{\hfil$\m@th#1##$\hfil\cr#2\crcr}}}
\newcommand{\charfusion}[3][\mathord]{
    #1{\ifx#1\mathop\vphantom{#2}\fi
        \mathpalette\mov@rlay{#2\cr#3}
      }
    \ifx#1\mathop\expandafter\displaylimits\fi}
\newcommand{\dcup}{\charfusion[\mathbin]{\cup}{\cdot}}
\DeclareFontFamily{U}  {MnSymbolC}{}
\DeclareSymbolFont{MnSyC}         {U}  {MnSymbolC}{m}{n}
\DeclareFontShape{U}{MnSymbolC}{m}{n}{
    <-6>  MnSymbolC5
   <6-7>  MnSymbolC6
   <7-8>  MnSymbolC7
   <8-9>  MnSymbolC8
   <9-10> MnSymbolC9
  <10-12> MnSymbolC10
  <12->   MnSymbolC12}{}
\DeclareMathSymbol{\powerset}{\mathord}{MnSyC}{180}
\let\epsilon=\varepsilon
\let\eps=\epsilon
\let\rho=\varrho
\let\theta=\vartheta
\let\kappa=\varkappa
\def\EE{{\mathds E}}
\def\NN{{\mathds N}}
\def\PP{{\mathds P}}
\def\RR{{\mathds R}}
\newcommand{\cC}{\mathcal{C}}
\newcommand{\cE}{\mathcal{E}}
\newcommand{\cF}{\mathcal{F}}
\newcommand{\cK}{\mathcal{K}}
\newcommand{\cL}{\mathcal{L}}
\newcommand{\cP}{\mathcal{P}}
\newcommand{\ccJ}{\mathscr{J}}
\def\hp{\hat p}
\def\eu{\mathrm{e}}
\def\lto{\longrightarrow}
\theoremstyle{plain}
\newtheorem{thm}{Theorem}[section]
\newtheorem{theorem}[thm]{Theorem}
\newtheorem{corollary}[thm]{Corollary}
\newtheorem{prop}[thm]{Proposition}
\newtheorem{claim}[thm]{Claim}
\newtheorem{lemma}[thm]{Lemma}
\theoremstyle{definition}
\newcommand{\seq}[1]{\accentset{\rightharpoonup}{#1}}
\let\phi=\varphi
\begin{document}

\title[Powers of Hamiltonian cycles in randomly augmented graphs]{Powers of Hamiltonian cycles in randomly \\ augmented graphs}

\author{Andrzej Dudek}
\address{Department of Mathematics, Western Michigan University, Kalamazoo, MI, USA}
\email{andrzej.dudek@wmich.edu}
\thanks{The first author was supported in part by Simons Foundation Grant \#522400.}

\author{Christian Reiher}
\address{Fachbereich Mathematik, Universit\"at Hamburg, Hamburg, Germany}
\email{christian.reiher@uni-hamburg.de}
\thanks{The second and fourth author were supported by the 
European Research Council (PEPCo 724903).}

\author{Andrzej Ruci\'nski}
\address{Department of Discrete Mathematics, Adam Mickiewicz University, Pozna\'n, Poland}
\email{rucinski@amu.edu.pl}
\thanks{The third author was supported in part by the Polish NSC grant 2014/15/B/ST1/01688}

\author{Mathias Schacht}
\address{Fachbereich Mathematik, Universit\"at Hamburg, Hamburg, Germany}
\email{schacht@math.uni-hamburg.de}

\begin{abstract} 
We study the existence of powers of Hamiltonian cycles in graphs with large minimum degree 
to which some additional edges have been added in a random manner. It follows from the theorems of 
Dirac and of  Koml\'os, Sark\"ozy, and Szemer\'edi that for every $k\geq 1$ and sufficiently large $n$
already the minimum degree $\delta(G)\ge\tfrac{k}{k+1}n$ for an $n$-vertex graph $G$ alone suffices to 
ensure the existence of a $k$-th power of a Hamiltonian cycle.
Here we show that under essentially the same degree assumption the addition of just $O(n)$ 
random edges ensures the presence of the $(k+1)$-st power of a Hamiltonian cycle
with probability close to one.
\end{abstract}

\maketitle

\setcounter{footnote}{1}

\section{Introduction}
All graphs we consider are finite and for simplicity we assume that the vertex set $V$   of any given graph is 
the set $\{1,\dots,|V|\}$. We recall that for $k\in\NN$ the \emph{$k$-th power~$H^k$} of a~graph~$H$ is defined  
to be a graph on the same vertex set, where edges in $H^k$ signify that its vertices have distance at most~$k$ in~$H$. 
Consequently, $H^0$ is the empty graph on the same vertex set and $H^1=H$. 

For integers $n\geq k+2$ and $k\geq 1$ we consider the set of graphs $\cP_n^k$ consisting 
of all $n$-vertex graphs~$G$ that contain the $k$-th power of a Hamiltonian cycle and we 
set $\cP^k=\bigcup_{n\geq k+2} \cP^k_n$. Clearly, 
$\cP_n^k$ is a monotone graph property for fixed $n$ and $k$, as powers of a Hamiltonian cycle cannot disappear 
by adding edges to a graph without adding new vertices.  

We investigate the probabilities that a given $n$-vertex graph $G$ with high minimum degree
augmented by a binomial random graph $G(n,p)$ spans a 
$k$-th power of a Hamiltonian cycle, i.e., we are interested in~$\PP(G\cup G(n,p)\in\cP_n^k)$.
More formally, for $\alpha\in [0,1)$ and~$p\colon \NN\to[0,1]$ we say $(\alpha,p)$ \emph{ensures} $\cP^k$ if 
\[
	\lim_{n\to\infty}\min_G\PP\big(G\cup G(n,p(n))\in\cP_n^k\big)=1\,,
\] 
where the minimum is taken over all $n$-vertex graphs~$G$ with $\delta(G)\geq \alpha n$.
We are interested in the `minimal' pairs $(\alpha,p)$ that  ensure $\cP^k$. 

For example, when $p=0$, then this reduces to the classical theorem of Dirac~\cite{D1952} on Hamiltonian cycles 
for $k=1$ and for $k\geq 2$ to the P\'osa--Seymour conjecture~\cites{E1964,S1974} and its resolution (for large $n$) by 
Koml\'os, Sark\"ozy, and Szemer\'edi~\cite{KSS1998}. These beautiful results then assert that 
$(\frac{k}{k+1},0)$ ensures $\cP^k$ for every $k\geq 1$. 

For the other extreme case, when $\alpha=0$, we arrive at the threshold problem for the existence of powers 
of Hamiltonian cycles in $G(n,p)$. This was asymptotically solved by Pos\'a~\cite{P1976} for $k=1$ 
(see also~\cites{AKS1985, B1984} for sharper results). For $k=2$ the threshold is only known up to a factor of ${\rm poly}(\log n)$ due 
to Nenadov and \v{S}kori\'c~\cite{NS2018}. For $k\geq 3$ the threshold is given by a result of Riordan~\cite{R2000}, which was observed by K\"uhn and Osthus~\cite{KO2012}.
Writing~$\hp_k(n)$ for the threshold for $\cP^k$ then these results can be summarised by
\[
\hp_1(n)\sim\frac{\ln n}{n}\,,\qquad
\left(\frac{\eu}{n}\right)^{\frac{1}{2}}\leq\hp_2(n)=O\Big(\frac{(\ln n)^4}{\sqrt{n}}\Big)\,,
\qqand
\hp_k(n)\sim\left(\frac{\eu}{n}\right)^{\frac{1}{k}}\ \text{for}\ k\geq 3\,,
\] 
where $\ln$ stands for the natural logarithm $\log_\eu$.

We study for $\alpha>\frac{k}{k+1}$ the asymptotics of the smallest function $p=p(n)$ 
such that $(\alpha,p)$ ensures $\cP^{k+1}$.
Recall that for $\alpha> \frac{k}{k+1}$ the Koml\'os--Sark\"ozy--Szemer\'edi theorem asserts that 
$(\alpha,0)$ ensures $\cP^k$ already. We show that under the same minimum degree assumption  for $n$-vertex graphs~$G$
the addition of $O(n)$ random edges suffices to ensure~$\cP^{k+1}$, which is asymptotically best possible 
(see discussion below).

\begin{theorem}\label{thm:main}
For every integer $k\in\NN$ and  every $\alpha\in\RR$ with $\tfrac k{k+1}<\alpha<1$ there is some  constant $C=C(k,\alpha)$ such that for $p=p(n)\geq C/n$
the pair $(\alpha, p)$ ensures $\cP^{k+1}$.
\end{theorem}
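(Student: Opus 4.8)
The plan is to run the absorption method, with the deterministic part~$G$ supplying a $k$-th power structure essentially for free and the $O(n)$ random edges supplying precisely the little that~$G$ is missing in order to upgrade it to a $(k+1)$-st power. Since $\cP^{k+1}_n$ is monotone we may assume $p=C/n$; write $R=G(n,C/n)$ and $\eps=\alpha-\tfrac k{k+1}>0$. The only facts about~$G$ we shall use are: (a) since $\delta(G)\ge(\tfrac k{k+1}+\eps)n$, the Hajnal--Szemer\'edi theorem (or the regularity method) yields a $K_{k+1}$-factor of~$G$ covering all but at most~$k$ vertices, together with considerable freedom in how it is chosen; and (b) any $k+1$ vertices of~$G$ have at least $\delta_0 n$ common neighbours, where $\delta_0=1-(k+1)(1-\alpha)>0$, so that $(k+1)$-st powers of paths can be freely extended and short segments freely rerouted inside~$G$ alone.

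The way the random edges enter is dictated by the following description of the target object. A $(k+1)$-st power of a path $w_1w_2\cdots$ is nothing but a sequence of $(k+1)$-cliques $F_1,F_2,\dots$ --- the consecutive blocks $\{w_{(i-1)(k+1)+1},\dots,w_{i(k+1)}\}$ --- each equipped with an ordering of its vertices, such that consecutive blocks are joined by a \emph{staircase}: writing $F_i=(a_1,\dots,a_{k+1})$ and $F_{i+1}=(b_1,\dots,b_{k+1})$ in the chosen orders, one needs $a_j$ adjacent to $b_1,\dots,b_j$ for every~$j$; in particular the last vertex $a_{k+1}$ of each block must be adjacent to \emph{all} of the next block. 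For instance, in a complete $(k+1)$-partite graph with nearly equal parts (minimum degree $\tfrac k{k+1}n$) every $K_{k+1}$ meets each part exactly once, so this ``$a_{k+1}$ to all of $F_{i+1}$'' adjacency fails in~$G$ at every one of the $\Theta(n)$ transitions, and each must be repaired by an edge not present in~$G$. This both explains why $\Theta(n)$ extra edges are necessary and indicates that $\Theta(n)$ sufficiently spread random edges ought to suffice, roughly one per bad transition.

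Concretely I would argue in four steps. \textbf{(1) Special edges and reservoir.} Exposing~$R$, one verifies that whp its $\Theta(n)$ edges contain $\Theta(n)$ vertex-disjoint ``special'' edges whose endpoints are spread over~$V$ and which are typical with respect to~$G$ (each such pair still having $\ge(2\alpha-1)n$ common $G$-neighbours); set aside a tiny reservoir~$W$ and a bounded number of special edges for the absorber. \textbf{(2) Absorbing power-path.} Using property~(b) and $O(1)$ special edges per gadget, build inside~$W$ many small absorbing $(k+1)$-st powers of paths, each able to take in any bounded set of further vertices without changing its two endpoints, and link them, through further special edges, into one $(k+1)$-st power of a path~$Q$. \textbf{(3) Almost-spanning cover.} On $V\setminus V(Q)$ take a $K_{k+1}$-factor of~$G$ as in~(a), put the at most~$k$ uncovered vertices aside, and arrange all but $o(n)$ of the blocks into a single long chain realising a staircase at every transition: wherever $G$ alone already admits the staircase one uses it, and at every remaining transition one inserts a reserved special edge at the junction, with an endpoint of that edge playing the role of~$a_{k+1}$. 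Since $R$ provides $\Theta(n)$ vertex-disjoint special edges while there are only $\sim n/(k+1)$ transitions, the difficulty is not the count but the \emph{alignment}: the cyclic order of the blocks and the internal orders of their vertices must be chosen so that the bad transitions land exactly on available special edges. \textbf{(4) Connect, absorb, close.} Join the cover-chain to~$Q$ using property~(b), invoke the absorbing property of~$Q$ to swallow all vertices set aside in~(1) and~(3) (including the leftover of the factor and any slack caused by $(k+1)\nmid n$), and finally close the resulting $(k+1)$-st power of a Hamilton path into a $(k+1)$-st power of a Hamilton cycle, again through common neighbourhoods in~$G$. For $C=C(k,\alpha)$ large enough all the events demanded of~$R$ hold simultaneously whp, which gives $\PP(G\cup R\in\cP^{k+1}_n)\to1$ uniformly over the admissible graphs~$G$.

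The main obstacle is the alignment in Step~(3). The random edges are too few and too uncontrollable to repair a prescribed transition on demand --- between a fixed pair of blocks $R$ whp contributes nothing --- so one cannot fix an arbitrary block arrangement first and then patch it; instead the cliques and their orders must be chosen from the outset so as to route the unavoidable bad transitions through the pre-planted special edges while keeping all but $o(n)$ of the blocks in play. I would attack this by an auxiliary connecting/Hamilton-path argument on the ``block graph'' (vertices the blocks, edges recording where a staircase is available, directly from~$G$ or after adding a special edge), exploiting property~(b) to show this auxiliary graph is dense and well connected, and a rotation--extension or absorption argument to thread the special edges in; the absorber~$Q$ then mops up the inevitable $o(n)$ defect. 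Everything else is routine: first- and second-moment estimates for the relevant substructures of~$R$, and the standard extremal inputs for~$G$.
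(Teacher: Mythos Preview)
Your high-level framework (absorption with a reservoir, an absorbing path, an almost-cover, and final connections) matches the paper's. The decisive gap is in Step~(3). You correctly flag the alignment as the main obstacle, but the proposed block-graph argument does not go through: take the near-extremal construction from the introduction—complete $(k+1)$-partite plus small sets $W_i\subseteq V_i$—and any $K_{k+1}$-factor consisting of transversals. Between two transversals avoiding~$W$, \emph{every} pair of orderings forces some $a_j,b_\ell$ into the same part with no $G$-edge, so in the block graph a typical block has $G$-staircase edges only to the at most $(k+1)\eps n$ blocks meeting~$W$; the random edges contribute on average only $O(C)$ further neighbours per block. The block graph therefore has relative minimum degree $O((k+1)^2\eps)$, far below anything rotation--extension or a Dirac-type criterion can use, and moreover the ordering of each block is constrained by \emph{both} adjacent transitions, not chosen independently. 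Property~(b) is a statement about common neighbourhoods of \emph{vertices} in~$G$; it says nothing about adjacencies between \emph{blocks} and does not make the auxiliary graph dense. A secondary issue: Step~(4) assumes that two prescribed $(k+1)$-cliques can be connected, and the cycle closed, through $G$ alone; in the same example every $(k+1)$-path of length $\ge k+2$ in $G$ must pass through~$W$, and this deterministic connecting is neither obvious nor argued.

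The paper resolves the cover step by a different mechanism. It applies the regularity lemma, takes a $K_{k+2}^-$-factor (note $k+2$, not $k+1$) in the reduced graph, and within each corresponding $(k+2)$-tuple of clusters covers vertices by $(k+1)$-paths of the form $K_{k+2}^-(m)\subseteq G$ completed by a single alternating $2m$-vertex path from $G(n,p)$ across the one missing pair of clusters. The point is that each cluster system offers $\Omega(n^{2m})$ candidate completing paths, so Janson's inequality beats a union bound over all $2^{O(n)}$ relevant set choices. The same ``many candidates plus Janson'' device drives the paper's Connecting Lemma: one first exhibits $\Omega(n^\ell)$ deterministic $k$-paths between the two given cliques and then lets $G(n,p)$ upgrade one of them to a $(k+1)$-path. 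Your scheme instead asks the random graph to supply designated single edges at $\Theta(n)$ specific junctions, and at $p=C/n$ there is no control allowing such edges to be aligned with a near-Hamiltonian ordering of the blocks.
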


For $k=0$ Theorem~\ref{thm:main} was already obtained by Bohman, Frieze, and Martin~\cite{BFM2003}. For 
larger~$k$ only suboptimal upper bounds for $p(n)$ were established so far. The best known bound of 
the form $\hp_{k+1}(n)/n^{\delta}$ for some $\delta>0$ and $k\geq2$
was given by Bedenknecht, Han, Kohayakawa, and Mota~\cite{BHKM2018} for $k\geq 2$ (see also~\cite{BMPP}). 

The following construction shows that Theorem~\ref{thm:main} is optimal in the sense that for every 
$\alpha>\frac{k}{k+1}$ there are $n$-vertex graphs $G$ with $\delta(G)/n\geq \alpha>\frac{k}{k+1}$ that require 
at least $\Omega(n)$ additional random edges to ensure a $(k+1)$-st power 
of a Hamiltonian cycle. 

Let $(k+1)\mid n$ and consider a vertex partition $[n]=V_1\dcup\cdots\dcup V_{k+1}$ 
with each part of size~$n/(k+1)$. Moreover, for every $i=1,\dots,k+1$ fix some subset $W_i\subseteq V_i$ of 
size~$|W_i|=\lceil \eps n\rceil$  
for some arbitrarily small $\eps>0$.  

Let $G$ be the $n$-vertex graph consisting of the union of the complete $(k+1)$-partite graph 
with vertex partition~$V_1\dcup \dots\dcup V_{k+1}$ and $k+1$ complete bipartite graphs with vertex classes~$W_i$ and~$V_i\setminus W_i$ for $i=1,\dots,k+1$. 
Clearly, $\delta(G)\geq (\tfrac{k}{k+1}+\eps)n$. However, any copy of~$\cC^{k+1}_n$, the $(k+1)$-st power of a Hamiltonian cycle, 
contains $\lfloor n/(k+2)\rfloor$ vertex-disjoint copies of~$K_{k+2}$ and each of these cliques would require at least 
one edge contained in some set~$V_i$. Consequently, every such clique has at least one vertex in 
$\bigcup_{i=1}^{k+1} W_i$ and, hence,~$G$ contains at most 
$\big|\bigcup_{i=1}^{k+1} W_i\big|=(k+1)\lceil \eps n\rceil$ vertex disjoint $K_{k+2}$'s. This implies that for~$\eps\ll (k+1)^{-2}$ one needs to add at least a matching of size $\Omega(n)$ to~$G$ before it may have a chance to contain a copy of $\cC^{k+1}_n$.

In view of the optimality of Theorem~\ref{thm:main}, the next open problem might be to find the asymptotics of the minimal $p$ such that $(\alpha,p)$ ensures $\cP^{k+1}$ for $\alpha=\frac{k}{k+1}$ or even smaller values of $\alpha$. This problem was also considered by B\"ottcher, Montgomery, Parczyk, and Person in~\cite{BMPP}.

\section{Method of Absorption}
\label{sec:absM}

The proof of Theorem~\ref{thm:main} is based on the {\it absorption method}, which has been 
introduced about a decade ago  in~\cite{rrs3}. Since then, it has turned out to be an 
extremely versatile technique for solving a variety of combinatorial problems 
concerning the existence of spanning substructures in graphs and hypergraphs obeying 
minimum degree conditions. 

A nice feature of this method is that it often makes it possible to split the problem at hand into
several subproblems, which may turn out to be more manageable. In the present case, we may 
reduce Theorem~\ref{thm:main} to the 
Propositions~\ref{prop:connect}--\ref{prop:covering} formulated later in this section.

Before stating the first of these propositions, we fix some terminology concerning powers
of paths. A \emph{$(k+1)$-path} is defined as the $(k+1)$-st power of a path. The ordered 
sets of the first and last $k+1$ vertices are called the \emph{end-sets} of the $(k+1)$-path, 
which must span $(k+1)$-cliques. If $K$ and $K'$ are the ordered cliques induced by the 
end-sets of a $(k+1)$-path~$P$, we say that~$P$ \emph{connects} $K$ and $K'$ and the vertices 
of $P$ not contained in~$V(K)\cup V(K')$ are its \emph{internal} vertices. 

We may now state the so-called Connecting Lemma, which is proved in Section~\ref{sec:connect}. 
Roughly speaking, it asserts that in the graphs we need to deal with, one may connect any two 
disjoint $(k+1)$-cliques by means of a ``short'' $(k+1)$-path. 
Moreover, we want to declare some small proportion of the vertex set to 
be ``unavailable'' for such a connection (e.g.\ because we already have something else in 
mind that we want to do with those vertices), then the desired connection does still exist.

\begin{prop}[Connecting Lemma]\label{prop:connect}
For every integer $k\geq 0$ and every $\eps>0$ there exists 
some $C>1$ such that for 
every $n$-vertex graph $G$ with $\delta(G)\geq (\tfrac{k}{k+1}+\eps)n$ and $p=p(n)\geq C/n$ 
a.a.s.\ $H=G\cup G(n,p)$ has the following property:\footnote{As usual \emph{a.a.s.} abbreviates \emph{asymptotically almost surely} and 
means that the statement holds with \emph{probability tending to~$1$ as $n\to\infty$}. 
Strictly speaking, we should therefore consider arbitrary sequences~$(G_n)_{n\in\NN}$ 
of $n$-vertex graphs with $\delta(G_n)\geq (\tfrac{k}{k+1}+\eps)n$. 
However, for a less baroque presentation we chose this `simplification' here and in the propositions below.}

\nopagebreak
For every subset $Z\subseteq V$ of size at most $\eps n/2$ and every pair of 
disjoint, ordered $(k+1)$-cliques $K$, $K'$, there exists a $(k+1)$-path connecting $K$ and $K'$ with
exactly $(k+1)2^{k+1}$ internal vertices from $V\setminus Z$.
\end{prop}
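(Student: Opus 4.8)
The plan is to build almost all of the connecting $(k+1)$-path greedily inside the dense graph $G$ and to use a bounded number of the random edges of $G(n,p)$ only where $G$ alone is too weak. Two preparatory facts drive this. The first is deterministic: since $\delta(G)\ge(\tfrac k{k+1}+\eps)n$, any at most $k+1$ vertices have at least $\eps(k+1)n$ common neighbours in $G$, and any at most $k$ vertices have more than $\tfrac1{k+1}n$; hence a $(k+1)$-path in $G$ can be prolonged one vertex at a time — the new vertex need only lie in the common neighbourhood of the preceding $k+1$ vertices — and this remains possible while avoiding any prescribed set of at most $\eps n/2$ vertices, and $O(1)$ further ones, because the relevant common neighbourhood stays of linear size. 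The second is probabilistic: for $C$ large enough, a first-moment estimate, exponential concentration, and a union bound over all pairs of vertex sets show that a.a.s.\ $G(n,p)$, and therefore $H$, has the property that between any two disjoint sets of size at least $\mu n$ there is an edge, where $\mu=\mu(k,\eps)>0$ is a small constant; deleting a bounded number of forbidden vertices keeps both sets this large, so such an edge may be chosen disjoint from any prescribed constant set.

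Now fix $Z$ with $|Z|\le\eps n/2$ and disjoint ordered $(k+1)$-cliques $K=(a_1,\dots,a_{k+1})$ and $K'=(b_1,\dots,b_{k+1})$. Choosing $w_1$ in the common $G$-neighbourhood of $a_1,\dots,a_{k+1}$, then $w_2$ in that of $a_2,\dots,a_{k+1},w_1$, and so on, all inside $V\sm(Z\cup V(K)\cup V(K'))$, one obtains a $(k+1)$-path starting at $K$ with active end-clique $(w_1,\dots,w_{k+1})$, which may then be prolonged further inside $V\sm(Z\cup\cdots)$ at will. Symmetrically, from $b_1,\dots,b_{k+1}$ one builds a $(k+1)$-path ending at $K'$, read backwards from some active clique. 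So it remains to join two $(k+1)$-cliques that one is still free to prolong — the restriction coming from $Z$ has been dealt with — by a $(k+1)$-path with a prescribed constant number of internal vertices.

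The crux is this splice. Joining the two halves means producing the missing internal vertices so that the ``staircase'' of roughly $\binom{k+2}2$ adjacencies between the last $k+1$ vertices of the left half and the first $k+1$ of the right half is present in $H$. A single random edge provides only one such adjacency, whereas the common $G$-neighbourhood of a $(k+1)$-set may be as small as $\eps(k+1)n$, so two of them need not meet when $\eps$ is small; the splice therefore cannot be done at once. Instead one pins down the $k+1$ vertices of the landing clique one coordinate at a time: at stage $i$ the left half has been prolonged so that $i-1$ of its last $k+1$ vertices already lie in the appropriate $G$-neighbourhoods of $b_1,\dots,b_{k+1}$, and one appends a short block — inserting, where $G$ cannot reach directly, a consecutive pair of vertices forming a random edge with one end in the common $G$-neighbourhood of the current active clique (of size $\ge\mu n$) and the other in the intersection of at most $k$ further $G$-neighbourhoods (again of size $\ge\mu n$ by the first fact) — so chosen that the $i$-th coordinate gets pinned down without disturbing the earlier ones.

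The points requiring genuine care are that every vertex ever selected is drawn from the intersection of at most $k$ explicit $G$-neighbourhoods together with at most one ``random'' adjacency constraint, which is precisely when the two preparatory facts apply, and that each block is long enough that the relevant $G$-neighbourhoods — which shrink as more coordinates get pinned — keep size at least $\mu n$. Keeping the number of simultaneously intersected $G$-neighbourhoods at or below $k$ is what makes the blocks long: roughly, pinning one more coordinate first requires re-establishing the flexibility that pinning the previous ones consumed, so the length needed about doubles at each of the $k+1$ stages, and the bookkeeping then yields exactly $(k+1)2^{k+1}$ internal vertices — a trivial padding step at the end ensures the count is this precise value. I expect this coordinate-by-coordinate splicing to be the only real obstacle; everything else is routine greedy extension in $G$ licensed by the deterministic fact, together with a bounded number of appeals to the probabilistic one.
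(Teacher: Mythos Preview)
Your approach differs substantially from the paper's, and the splice step has a genuine gap.

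The paper separates the deterministic and random ingredients cleanly. A purely deterministic lemma, proved by induction on $k$, shows that $G$ alone contains $\Omega(n^{\ell_k})$ many $k$-walks with $\ell_k=(k+1)(2^{k+1}-2)$ internal vertices between any two ordered $k$-cliques; the inductive step passes to $G[N(u)]$, where the relative minimum degree improves to $\tfrac{k-1}{k}+\eps$ and the problem drops to $(k-1)$-walks, and this recursion is where the factor $2^{k+1}$ actually comes from. One then greedily extends $K$, $K'$ by $k+1$ vertices each to ordered cliques $L$, $L'$, applies the lemma between the last $k$ vertices of $L$ and the first $k$ of $L'$, and observes that each resulting $k$-path becomes a $(k+1)$-path connecting $K$ with $K'$ once a fixed linear forest on its vertex set is added. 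Janson's inequality then gives probability at least $1-4^{-n}$ that $G(n,p)$ supplies such a forest for at least one of the $\Omega(n^{\ell_k+2k+2})$ candidates, and a union bound over all $K$, $K'$, $Z$ finishes. The random edges are used \emph{all at once} via Janson, never sequentially.

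Your scheme instead grows the $(k+1)$-path greedily in $H$, inserting a single random edge between two linear-sized sets whenever $G$ alone cannot supply the next adjacency. The probabilistic fact you quote is correct, but the splice does not close as described. Take the last internal vertex $v$ immediately preceding a fixed target clique $(r_1,\dots,r_{k+1})$: in a $(k+1)$-path $v$ must be $H$-adjacent to its $k+1$ predecessors \emph{and} to each of $r_1,\dots,r_{k+1}$, for $2k+2$ constraints in all; spending one of them on a random edge still leaves $2k+1$ simultaneous $G$-neighbourhood conditions, against your declared budget of $k$. Your ``pin one coordinate at a time'' manoeuvre does not evade this: however the earlier stages are arranged, the final stage must still produce a vertex carrying all $k+1$ target constraints together with its path constraints, and no amount of preparatory lengthening relieves that load on the terminal vertex. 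The doubling heuristic may account for the \emph{length} $(k+1)2^{k+1}$, but it does not explain how the last few vertices acquire their adjacencies; already for $k=1$ the construction stalls at the penultimate and final internal vertices. What is missing is precisely the mechanism the paper supplies --- either an inductive descent into common neighbourhoods, or a global Janson-type argument that secures many random edges simultaneously.
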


As the proof of Theorem~\ref{thm:main} progresses, the number of vertices we do not want to
use for connections anymore gets out of control. Therefore one puts a small set $R$ of 
vertices aside at the beginning, which is called the {\it reservoir} and has the property 
that, actually, we can always connect any two given $(k+1)$-cliques through the reservoir. 
Of course, in order to use the reservoir multiple times, we shall need again a version, where 
a small part of the reservoir is ``unavailable'' at any particular moment. 
A precise version of the Reservoir Lemma, which is proved in Section~\ref{sec:reservoir},
reads as follows.   

\begin{prop}[Reservoir Lemma]\label{prop:reservoir}
For every integer $k\geq 0$  and every $\eps>0$, $\gamma\in(0,1)$ 
there exists $C>1$ such that for every $n$-vertex graph $G$ with 
$\delta(G)\geq (\tfrac{k}{k+1}+\eps)n$ there exists a set of vertices $R\subseteq V$ 
of size $\gamma^2n$
such that for $p=p(n)\geq C/n$ a.a.s.\ $H=G\cup G(n,p)$ has the following property:

For every $S\subseteq R$ with  $|S|\leq \eps|R|/4$ and for every pair of disjoint, 
ordered $(k+1)$-cliques $K$, $K'$ in~$G-R$, there exists a~$(k+1)$-path connecting $K$ and $K'$ 
with exactly~$(k+1)2^{k+1}$ internal vertices from $R\setminus S$.   
\end{prop}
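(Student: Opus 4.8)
The plan is to choose the reservoir $R$ as a suitable random subset of $V$ of size $\gamma^2 n$ and then, given a forbidden set $S$ and a pair of cliques $K,K'$ to be connected, to apply the Connecting Lemma (Proposition~\ref{prop:connect}) \emph{not} to $H$ itself but to the subgraph of $H$ induced on $W=R\cup V(K)\cup V(K')$. The key point is that any $(k+1)$-path produced inside $W$ has all of its internal vertices in $R$, since by definition they avoid $V(K)\cup V(K')$, and that we may hand $S$ to Proposition~\ref{prop:connect} as its forbidden set; this delivers precisely a $(k+1)$-path connecting $K$ and $K'$ with exactly $(k+1)2^{k+1}$ internal vertices in $R\setminus S$. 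A more hands-on alternative would be to extend $K$ and $K'$ by short paths into $R$ (which exist already in $G$, by an inclusion--exclusion count) and then join the resulting cliques inside $R$ using Proposition~\ref{prop:connect}, but this overshoots the prescribed number $(k+1)2^{k+1}$ of internal vertices by $2(k+1)$, which is why I prefer the detour through $W$.

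So the first task is to choose $R$ well. Picking $R$ uniformly at random among the $\gamma^2 n$-subsets of $V$, for a fixed vertex $v$ the random variable $|N_G(v)\cap R|$ is hypergeometric with mean $\gamma^2\deg_G(v)\ge(\tfrac{k}{k+1}+\eps)|R|$, so by a Chernoff--Hoeffding bound for sampling without replacement it falls below $(\tfrac{k}{k+1}+\tfrac{3\eps}{4})|R|$ with probability at most $e^{-cn}$ for some $c=c(\eps,\gamma)>0$. A union bound over $v$ shows that some $R$ (indeed almost every one) satisfies $|N_G(v)\cap R|\ge(\tfrac{k}{k+1}+\tfrac{3\eps}{4})|R|$ for every $v\in V$; fix such an $R$, noting that it depends on $G$ only. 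Now let $p\ge C/n$ with $C=C(k,\eps,\gamma)$ large and $H=G\cup G(n,p)$. Given $S\subseteq R$ with $|S|\le\eps|R|/4$ and disjoint ordered $(k+1)$-cliques $K,K'$ in $G-R$, set $W=R\cup V(K)\cup V(K')$. Since $|W|\le|R|+2(k+1)$, the degree condition on $R$ yields $\delta(G[W])\ge(\tfrac{k}{k+1}+\tfrac{\eps}{2})|W|$ once $n$ is large, while $G(n,p)[W]$ has the law of $G(|W|,p)$; choosing $C\ge\gamma^{-2}C_0$, where $C_0=C_0(k,\eps/2)$ is the constant supplied by Proposition~\ref{prop:connect}, guarantees $p\ge C_0/|W|$. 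Hence Proposition~\ref{prop:connect} applies to $G[W]\cup G(n,p)[W]$, and feeding it the pair $(K,K')$ and the forbidden set $S$ — admissible since $|S|\le\tfrac{\eps}{4}|R|\le\tfrac{\eps}{4}|W|$ — produces a $(k+1)$-path of $H$ connecting $K$ and $K'$ with exactly $(k+1)2^{k+1}$ internal vertices, all in $W\setminus S$; being internal they avoid $V(K)\cup V(K')$, hence they lie in $R\setminus S$, as required.

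The step I expect to be the main obstacle is the uniformity over the pairs $K,K'$: the auxiliary graph $G[W]$ depends on the pair, whereas Proposition~\ref{prop:connect} is only an \emph{a.a.s.} statement, and there are up to $n^{2(k+1)}$ ordered pairs to control (the inner quantifier ``for every $S$'' is free, being already part of the conclusion of Proposition~\ref{prop:connect}). A crude union bound over the pairs therefore requires the failure probability in Proposition~\ref{prop:connect} to be $o(n^{-2(k+1)})$. This is what its proof in Section~\ref{sec:connect} actually yields — there the failure probability is superpolynomially (indeed exponentially) small in the number of vertices — so summing over all $K,K'$ still leaves an $o(1)$ error and the Reservoir Lemma follows. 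Everything else is routine degree bookkeeping together with the single concentration estimate above.
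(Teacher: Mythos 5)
Your proof is correct, but it takes a genuinely different route from the paper's. Both start the same way: choose $R$ by hypergeometric concentration so that every vertex retains proportional minimum degree into $R$ (the paper aims for $(\tfrac{k}{k+1}+\tfrac{\eps}{2})|R|$, you for $(\tfrac{k}{k+1}+\tfrac{3\eps}{4})|R|$; either suffices). For the probabilistic core, the paper re-runs the connecting machinery from scratch inside the reservoir: it greedily extends $K$ and $K'$ into $(k+1)$-cliques $L,L'$ lying in $G[R\setminus S]$, applies Lemma~\ref{lem:dcon} in $G[R\setminus S]$ to obtain $\Omega(|R|^{\ell})$ candidate $k$-paths, and invokes Corollary~\ref{cor:janson2} to get failure probability at most $4^{-n}$ per triple $(K,K',S)$, after which an $n^{2k+2}\cdot2^{n}$-fold union bound finishes. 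You instead treat Proposition~\ref{prop:connect} as a near black box and apply it to the induced graph $H[W]$ on $W=R\cup V(K)\cup V(K')$, observing that its internal vertices are automatically confined to $R\setminus S$ because they avoid $V(K)\cup V(K')$ and $S$, and that they number exactly $(k+1)2^{k+1}$. This is a cleaner reduction, but it does require one peek inside the black box: the bare a.a.s.\ statement of Proposition~\ref{prop:connect} gives no quantitative failure probability, and your union bound over the $n^{2(k+1)}$ choices of $(K,K')$ (the set $S$ being handled for free by the inner quantifier of Proposition~\ref{prop:connect}) needs it to be $o(n^{-2(k+1)})$. You correctly point out that its proof yields a bound of shape $m^{2k+2}\,2^{m}\,4^{-m}=2^{-\Omega(m)}$ for an $m$-vertex host, so with $m=|W|=\gamma^2n+O(1)$ the failure probability is $2^{-\Omega(n)}$ and the union bound goes through comfortably. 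Your degree bookkeeping for $G[W]$, the rescaling $C=\gamma^{-2}C_0$ so that $p\ge C_0/|W|$, and the admissibility of $S$ as the forbidden set $Z$ are all in order. What the paper's route buys is self-containedness, working entirely inside $R$ with Lemma~\ref{lem:dcon} and Corollary~\ref{cor:janson2} and never having to reopen Proposition~\ref{prop:connect}; what yours buys is brevity and a transparent reduction of the Reservoir Lemma to the Connecting Lemma applied on a $\Theta(n)$-vertex induced subgraph.
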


The next result (proved in Section~\ref{sec:absorb}) plays a central r\^ole and, in fact, 
this kind of statement gave the absorption method its name. It promises the existence of a 
very special, so-called {\it absorbing $(k+1)$-path} $A$, which can `absorb' any small set of 
vertices. Thus the problem of constructing the $(k+1)$-st power of a Hamiltonian cycle gets 
reduced to the much easier problem of finding the $(k+1)$-st power of an almost spanning cycle
containing~$A$. Let us remark at this point that the Absorbing Lemma gets utilised after the 
Reservoir Lemma and the set $R$ appearing below takes this fact into account.  

\begin{prop}[Absorbing Lemma]\label{prop:absorbing}
For every integer $k\ge 0$ and every $\eps>0$ there exist~$\gamma\in(0, \eps/4^{k+2})$ 
and $C>1$ such that 
for every $n$-vertex graph $G$ with $\delta(G)\geq (\tfrac{k}{k+1}+\eps)n$
and every $p=p(n)\geq C/n$ a.a.s.\ $H=G\cup G(n,p)$ has the following property:

\nopagebreak
For every set of vertices $R\subseteq V$ of size $\gamma^2n$ the graph $H-R$ contains
a $(k+1)$-path $A$ with at most $\gamma n/2$ vertices such that for every $U\subseteq V$
with $|U|\le 2\gamma^2n$ the graph $H[V(A)\cup U]$ contains a spanning $(k+1)$-path having 
the same end-sets as $A$.
\end{prop}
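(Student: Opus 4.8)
The plan is to run the absorption method: $A$ will be a concatenation of many pairwise vertex-disjoint \emph{absorbing gadgets}. Fix $\gamma>0$ small (in particular $\gamma<\eps/4^{k+2}$, and small enough for the counting below) and then $C$ large. Call a $(k+1)$-path $F$ on a bounded vertex set $T$, with ordered end-sets $K_1,K_2$, an \emph{absorber for} $v\notin T$ if $H[T\cup\{v\}]$ also contains a $(k+1)$-path with the same ordered end-sets $K_1,K_2$; let $S_F$ be the set of all such $v$. The target is a family $\cF$ of pairwise vertex-disjoint gadgets, few enough that after being linked up they span at most $\gamma n/2$ vertices, with the property that every vertex of $V$ lies in $S_F$ for at least $2\gamma^2 n$ members $F\in\cF$. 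This suffices: given $U\subseteq V$ with $|U|\le 2\gamma^2 n$ we may assume $U$ is disjoint from $V(A)$ (vertices of $U$ already on $A$ require nothing), and since each $u\in U$ lies in $\ge 2\gamma^2 n\ge|U|$ of the sets $S_F$, Hall's theorem assigns distinct gadgets $F_u\in\cF$ with $u\in S_{F_u}$; carrying out the defining local swap inside each $F_u$ simultaneously---they take place on disjoint vertex sets and each preserves the end-sets of its gadget, hence the way that gadget sits inside $A$---turns $A$ into a $(k+1)$-path on $V(A)\cup U$ with the same end-sets as $A$. Since the gadgets have bounded size, $\sum_{F\in\cF}|S_F|\ge 2\gamma^2 n^2$ forces a typical $|S_F|$ to be \emph{linear} in $n$, so the real task is to build, near every vertex, linearly many disjoint gadgets with linear absorbable set.

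For the gadget I would use this reconfiguration. If $w_1,\dots,w_{2k+2}$ span a $(k+1)$-path (i.e.\ the $(k+1)$-st power of $P_{2k+2}$) in $H$ and occur as $2k+2$ consecutive vertices of an ambient $(k+1)$-path, then for any $v\in\bigcap_i N_H(w_i)$ the block may be rewritten as $w_1,\dots,w_{k+1},v,w_{k+2},\dots,w_{2k+2}$: a routine check shows every distance-$\le k+1$ pair in the new ordering is an edge of $H$, and since the first and last $k+1$ vertices of the block keep their positions, nothing outside the block is disturbed. Hence an absorber for $v$ can be taken to be a short $(k+1)$-path made of such a block flanked by two short tails ending in the prescribed end-cliques, and then $S_F\supseteq\bigcap_i N_H(w_i)$. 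This is where $\delta(G)\ge(\tfrac{k}{k+1}+\eps)n$ is used: it gives an abundance of $(k+1)$-cliques and, through Hajnal--Szemer\'edi-type embeddings inside common neighbourhoods, the power-of-$P_{2k+2}$ blocks, while only a bounded number of random edges per gadget are needed to provide the $K_{k+2}$'s that $G$ itself may be missing. Crucially, those random edges can always be placed among the gadget's own vertices and never at the vertex being absorbed---which matters, because a typical vertex meets only a bounded number of random edges and some meet none (such vertices are simply not used inside gadgets).

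Granting a sufficiently rich pool of such gadgets, robust under the deletion of the $\gamma^2 n$ vertices of $R$, I would obtain $\cF$ inside $H-R$ by a random sparsification---retain each gadget independently with probability $q=\Theta(\gamma^2)$---followed by discarding one gadget from every intersecting pair; first- and second-moment estimates show the survivors are pairwise disjoint, few, and still place every vertex in at least $2\gamma^2 n$ of the sets $S_F$. The gadgets of $\cF$ are then threaded onto a single $(k+1)$-path by repeated use of the Connecting Lemma (Proposition~\ref{prop:connect}): each step joins the right end-clique of one gadget to the left end-clique of the next through $(k+1)2^{k+1}$ fresh internal vertices, taking as the forbidden set $Z$ all vertices used so far together with $R$; since $\gamma<\eps/4^{k+2}$, this $Z$ stays below $\eps n/2$, so the lemma applies, and the resulting $(k+1)$-path $A$ lies in $H-R$, has at most $\gamma n/2$ vertices, and inherits the absorbing property of $\cF$.

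\emph{The main obstacle} is producing, around every vertex, linearly many disjoint reconfigurable blocks $\{w_1,\dots,w_{2k+2}\}$ with $|\bigcap_i N_H(w_i)|=\Omega(n)$. In the generic regime this is routine, since $2k+2$ well-chosen vertices then have $\Omega(n)$ common neighbours and $G$ abounds in copies of $K_{k+2}$. In the extremal regime, where $G$ is close to the Tur\'an graph $T(n,k+1)$, the common neighbourhood of even $k+2$ vertices can be empty and $G$ may contain no $K_{k+2}$ at all; here one exploits the approximate $(k+1)$-partition of $G$ by placing the block entirely outside the class of the vertex $v$ to be absorbed---so that $v$ is automatically joined to all of it---and using the few random edges the block still needs strictly inside those other classes, never at $v$, which is what makes random-edge-free vertices absorbable and keeps the sets $S_F$ linear. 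Turning this dichotomy into a clean stability argument, and checking throughout that the per-gadget budget of random edges is never forced onto the absorbed vertex, is the technical heart of the proof.
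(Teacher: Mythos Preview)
Your framework---the same $(2k+2)$-vertex block with a central insertion, random sparsification to a disjoint family, then threading via Proposition~\ref{prop:connect}---is exactly the paper's. The divergence is in how you produce $\Omega(n^{2k+1})$ many $x$-absorbers for each $x$, and here your proposed stability dichotomy is both unnecessary and, as stated, incorrect.

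Your extremal branch says: when $G$ is close to $T(n,k+1)$, place the whole block $w_1,\dots,w_{2k+2}$ in the $k$ classes not containing $v$ and supply the missing edges from $G(n,p)$. But the block is the $(k+1)$-st power of $P_{2k+2}$, which contains $K_{k+2}$ and hence has chromatic number $k+2$; forcing it into $k$ classes costs many random edges, and for $k=1$ it costs \emph{all} of them (the block lives inside a single independent class), so with $p=C/n$ the expected number of such configurations is $O(n^{-1})$ and the argument collapses. More generally, once a gadget needs $c\ge 2$ random edges in prescribed positions, the supply of completed gadgets drops by a factor $n^{1-c}$ and the second-moment bookkeeping for overlapping pairs becomes delicate.

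The paper sidesteps the dichotomy with a single observation that replaces your ``technical heart''. Let $P^-$ be the block with the \emph{middle} edge $\{w_{k+1},w_{k+2}\}$ removed. Then $\chi(P^-)\le k+1$: colour the vertices $1,\dots,k+1,k+1,1,\dots,k$. Since Lemma~\ref{lem:scale} gives $\delta(G[N(x)])\ge(\tfrac{k-1}{k}+\eps)|N(x)|$, Erd\H os--Stone and supersaturation yield $\Omega(n^{2k+2})$ ordered copies of $P^-$ inside $G[N(x)]$, uniformly in $x$ and with no case analysis. Each such copy becomes a full absorber as soon as its \emph{one} missing middle edge appears in $G(n,p)$. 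Concretely, the paper defines an auxiliary graph $B_x$ on $V$ whose edges are the pairs $vv'$ admitting $\Omega(n^{2k})$ such $P^-$-extensions with $v,v'$ at positions $k+1,k+2$; then $e(B_x)=\Omega(n^2)$, and a Chernoff bound (union over all $x$ and all choices of $R$) shows that a.a.s.\ $G(n,p)$ hits $\Omega(n)$ edges of each $B_x-R$, giving $\Omega(n^{2k+1})$ $x$-absorbers in $G-R$. The rest---random selection with $q=\Theta(\gamma^{3/2}n^{-2k})$ (not $\Theta(\gamma^2)$; the pool has size $\Theta(n^{2k+1})$), deletion of overlapping pairs, and connection via Proposition~\ref{prop:connect}---is as you describe.

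So the fix is not a sharper stability argument but the realisation that the absorber minus one edge is $(k+1)$-colourable, which lets the deterministic graph carry all but that single edge.
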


The last ingredient of our argument is a statement to the effect that essentially 
the whole graph under consideration can be covered by ``not too many'' $(k+1)$-paths. 
Such paths can be connected together with the absorbing path $A$ obtained earlier 
by means of ``relatively few'' connections to be made through the reservoir, thus 
producing the desired $(k+1)$-st power of an almost spanning cycle. We shall prove this 
Covering Lemma in Section~\ref{sec:cover}.

\begin{prop}[Covering Lemma]\label{prop:covering}
For every integer $k\geq 0$ and every $\eps>0$, $\gamma\in(0,\eps/2]$ there exists $C>1$
such that for every $n$-vertex graph $G$ with $\delta(G)\geq (\tfrac{k}{k+1}+\eps)n$ and  
$p=p(n)\geq C/n$ a.a.s.\ $H=G\cup G(n,p)$ has the following property:

\nopagebreak
For every subset $Q\subseteq V$ of size at most $\gamma n$ there exists a family 
of $\gamma^{3} n$ 
vertex disjoint $(k+1)$-paths in $H-Q$ that cover all but at most $\gamma^2n$ vertices from 
$V\setminus Q$.
\end{prop}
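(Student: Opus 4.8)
The plan is to combine a random greedy / nibble-type covering using the additional random edges together with the density coming from the minimum degree hypothesis. The key observation is that in $H=G\cup G(n,p)$ with $p\ge C/n$, almost every vertex lies in ``many'' copies of $K_{k+2}$ spanned by $G$ together with a few random edges: the minimum-degree assumption $\delta(G)\ge(\tfrac{k}{k+1}+\eps)n$ forces, by a standard supersaturation / Kővári–Sós–Turán-type counting, that $G$ contains $\Omega(n^{k+1})$ copies of $K_{k+1}$, and in fact that every vertex is in $\Omega(n^k)$ of them; each such $K_{k+1}$ together with any common neighbour extends to a $K_{k+2}$. Thus a short $(k+1)$-path of bounded length through a prescribed starting clique can be grown vertex by vertex: having built a $(k+1)$-path whose last end-set is a $(k+1)$-clique $K$, the common neighbourhood of $K$ in $G$ has size $\ge\eps n$, and we may extend the path by one vertex chosen from this common neighbourhood avoiding the (bounded) set of vertices already used. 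Iterating a bounded number of times produces a $(k+1)$-path of any prescribed bounded length, entirely inside $G$, avoiding any prescribed set of $o(n)$ forbidden vertices.

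First I would fix a length parameter $\ell=\ell(k,\gamma)$, a large constant, and aim to cover $V\setminus Q$ by about $\gamma^3 n$ vertex-disjoint $(k+1)$-paths each on roughly $\ell$ vertices (so $\gamma^3 n\cdot \ell$ is just under $n$); one checks the arithmetic works by taking $\ell$ comparable to $\gamma^{-3}$. I would build these paths one at a time by the greedy extension described above: maintain the set $B$ of vertices used so far (union of $Q$ and all previously built paths, plus the current path), which stays of size $\le n - \gamma^2 n/2$ as long as we have not yet covered enough; start a new path at an arbitrary vertex not in $B$, complete its first end-set to a $(k+1)$-clique using the common-neighbourhood argument (again valid since $\delta(G)\ge(\tfrac k{k+1}+\eps)n$ and $|B|\le(1-\tfrac{\gamma^2}{2})n$ gives common neighbourhoods of size $\ge \eps n/2$ outside $B$), and then extend $\ell-(k+1)$ further times. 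Because each extension step only needs one vertex from a set of size $\ge \eps n/2$, and because $\ell$ is a constant, the whole construction never gets stuck until the uncovered set has size below $\gamma^2 n$, at which point we stop. This produces a family of at most $\gamma^3 n$ vertex-disjoint $(k+1)$-paths in $H-Q$ covering all but at most $\gamma^2 n$ vertices of $V\setminus Q$, as required.

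The role of the random edges is to guarantee that the greedy process does not need to be clever about which clique it sits on — but actually, since every step uses only $G$-edges via common neighbourhoods, one realizes the random graph is barely needed here, beyond making the statement uniform with the other lemmas; the main content is purely the minimum-degree supersaturation of $K_{k+2}$'s. The step I expect to be the genuine technical point is verifying that a $(k+1)$-clique always has a common neighbourhood of size $\ge\eps n$ in $G$: this is exactly the inductive fact that in an $n$-vertex graph with $\delta(G)\ge(\tfrac{k}{k+1}+\eps)n$ every $(k+1)$-clique extends, since $k+1$ vertices each missing at most $(\tfrac1{k+1}-\eps)n$ others have at least $n-(k+1)(\tfrac1{k+1}-\eps)n=\eps n(k+1)$ — more than enough — common neighbours; one must then be careful to subtract the forbidden set $B$ and check $\eps n - |B\cap(\text{that set})|$ stays positive, which holds precisely because we stop once the uncovered portion drops below $\gamma^2 n \le \eps n /2$. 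With that in hand the remaining bookkeeping (counting paths, bounding the uncovered set, noting the a.a.s.\ clause is vacuously satisfied) is routine.
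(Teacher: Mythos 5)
Your proposal has a fatal gap at the greedy extension step, and the broader plan of dispensing with the random edges cannot work.

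The concrete problem is the claim that a partial $(k+1)$-path whose last $k+1$ vertices form a clique $K$ can always be extended one vertex further into $V\sm B$, where $B$ is the set of vertices already used. The common neighbourhood $N_G(K)$ of a $(k+1)$-clique has size at least $(k+1)\eps n$ by the bound~\eqref{claim1:N}, but no more than that is guaranteed. The forbidden set $B$, on the other hand, grows to size close to $(1-\gamma^2)n$ as the covering progresses. Once $|B|$ exceeds $(k+1)\eps n$ — which happens after covering only a small linear fraction of the vertices — nothing prevents $N_G(K)\subseteq B$, so the greedy process stalls. Your remark that ``$|B|\le(1-\gamma^2/2)n$ gives common neighbourhoods of size $\ge\eps n/2$ outside $B$'' is simply not true; it would require $|N_G(K)|$ to be close to $n$, which the hypothesis does not give.

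More fundamentally, the random edges cannot be ``barely needed'' here, and the paper's own extremal construction in the introduction shows why. Every $(k+1)$-path on at least $k+2$ vertices contains a copy of $K_{k+2}$, so a family of $\gamma^3 n$ vertex-disjoint $(k+1)$-paths covering all but $\gamma^2 n$ vertices contains roughly $n/(k+2)$ vertex-disjoint copies of $K_{k+2}$. But the $(k+1)$-partite graph with the small sets $W_i$ attached has minimum degree $(\tfrac{k}{k+1}+\eps)n$ and at most $(k+1)\lceil\eps n\rceil$ vertex-disjoint copies of $K_{k+2}$, which for small $\eps$ is far fewer than $n/(k+2)$. So for that $G$, no such family exists in $G$ alone. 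The minimum degree $(\tfrac{k}{k+1}+\eps)n$ is the Tur\'an threshold for $K_{k+2}^-$, not $K_{k+2}$; it yields an almost-perfect $K_{k+2}^-$-factor (via Theorem~\ref{cko}), and the r\^ole of $G(n,p)$ in the paper's proof is precisely to supply the ``missing edge'' inside suitable blow-ups $K^-_{k+2}(m)$ — found with the regularity method so that one has enough flexibility to hit the random edges — thereby upgrading them to $(k+1)$-paths. Your proof cannot be repaired by bookkeeping; it needs a genuinely different mechanism that uses the random edges.
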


We conclude the present section with a proof of our main result assuming the four 
propositions stated above. In fact, we shall not make a direct reference to 
Proposition~\ref{prop:connect} in the proof below, but it will be employed in the proof
of Proposition~\ref{prop:absorbing} in Section~\ref{sec:absorb}.
 
\begin{proof}[Proof of Theorem~\ref{thm:main}]
	Let $k\in \NN$ and $\alpha\in\bigl(\frac k{k+1}, 1\bigr)$ be given and 
	set~$\eps=\alpha-\frac k{k+1}$. Plugging~$k$ and $\eps$ into 
	Proposition~\ref{prop:absorbing} we get $\gamma\in(0, \eps/4^{k+2})$ and $C_3>1$.
	Next we appeal with $k$, $\eps$, and $\gamma$ to 
	Propositions~\ref{prop:reservoir} and~\ref{prop:covering}, thus getting 
	two further constants $C_2>1$ and $C_4>1$.
	We claim that $C=\max\{C_2, C_3, C_4\}$ is as desired. 
	
	So let an $n$-vertex graph $G$ with $\delta(G)\geq (\tfrac{k}{k+1}+\eps)n$
	as well as some $p\ge C/n$ be given. We need to check that a.a.s.\ the graph 
	$H=G\cup G(n,p)$ contains the $(k+1)$-st power of a Hamiltonian cycle $\cC_n^{k+1}$. 
	For this purpose it suffices to prove that every graph $H=G\cup G(n,p)$
	exemplifying the conclusion of Proposition~$\color{red!60!black}{2.x}$ for each 
	$x\in\{2, 3, 4\}$ contains a copy of~$\cC_n^{k+1}$.

	Use Proposition~\ref{prop:reservoir} for obtaining a reservoir set $R\subseteq V$
	of size $\gamma^2n$. By Proposition~\ref{prop:absorbing} there exists an absorbing 
	$(k+1)$-path $A\subseteq H-R$. Since $|R|+|V(A)|\le(\gamma^2+\gamma/2)n<\gamma n$,
	we can apply Proposition~\ref{prop:covering} to $Q=R\cup V(A)$ and obtain a 
	collection $\cP$ of at most $\gamma^3n$ vertex-disjoint $(k+1)$-paths covering 
	the whole graph $H-Q$ except for a small set of vertices $U_\star\subseteq V\sm Q$
	with $|U_\star|\le \gamma^2n$. Now we want to create the $(k+1)$-st power of a cycle 
	$\cC\subseteq H$ 
	\begin{enumerate}
		\item[$\bullet$] containing $A$ and each $(k+1)$-path in $\cP$ as a sub-path,
		\item[$\bullet$] such that between any two ``consecutive'' such sub-paths of 
			$\cC$ there are always exactly $(k+1)2^{k+1}$ vertices from $R$. 
	\end{enumerate} 
	
	For building $\cC$ we intend to make $|\cP|+1$ successive applications of 
	Proposition~\ref{prop:reservoir}. In each such application we let $K$ and $K'$
	be the end-sets of $(k+1)$-paths we wish to connect and we let $S\subseteq R$
	be the set of all vertices that we obtained as internal vertices in previous 
	applications of Proposition~\ref{prop:reservoir}. When arriving at the last step of this 
	process closing the cycle~$\cC$, the set $S$ of vertices we need to exclude has size 
\[
		|S|=(k+1)2^{k+1}\cdot |\cP|\le 4^{k+1}\gamma^3 n\le \frac{\eps}{4}|R|\,,
	\]
which justifies the applications of Proposition~\ref{prop:reservoir}. 
	
	Now the complement $U=V\sm V(\cC)$ satisfies $|U|=|U_\star|+|R\sm V(\cC)|\le 2\gamma^2n$, 
	whence by Proposition~\ref{prop:absorbing} there exists a $(k+1)$-path $A_U$
	with $V(A_U)=V(A)\dcup U$ having the same end-sets as $A$. Therefore, we can replace $A$
	by $A_U$ in $\cC$ and obtain the desired $(k+1)$-st power of a Hamiltonian cycle  
	$\cC_n^{k+1}\subseteq H$. 
\end{proof}

\section{Preliminaries} In the proofs of the
propositions stated in Section~\ref{sec:absM}
we make use of the high minimum degree condition of the given graph~$G$ and combine it with 
properties of $G(n,p)$. We prepare for this by collecting a few observations for such graphs~$G$
in Section~\ref{sec:degrees} and for the random graph in Section~\ref{sec:gnp} below.

\subsection{Neighbourhoods in graphs of large minimum degree}\label{sec:degrees}
We recall the following standard notation. For a set $V$ and an integer $j\in\NN$ we write $V^{(j)}$ for the set of all $j$-element subsets of $V$. Given a graph $G=(V,E)$ we write $N_G(u)$ 
for the neighbourhood of a vertex~$u\in V$.
More generally, for a subset $U\subseteq V$ we set 
\[
	N_G(U)=\bigcap_{u\in U}N(u)
\] 
for the \emph{joint neighbourhood} of $U$. For simplicity we may suppress $G$ in the subscript and 
for sets $\{u_1,\dots,u_r\}$ we may write $N(u_1,\dots,u_r)$ instead of $N(\{u_1,\dots,u_r\})$.

\begin{lemma}\label{lem:scale}
	For every integer $k\geq 0$ and $\eps>0$ the following holds for every $n$-vertex graph $G=(V,E)$
	with $\delta(G)\geq(\tfrac{k}{k+1}+\eps)n$. For every $j\in[k+1]$ and every $J\in V^{(j)}$ 
 	we have
\begin{equation}\label{claim1:N}
		|N(J)|\ge\left(\frac{k+1-j}{k+1}+j\eps \right)n\,.
	\end{equation}
Furthermore, for $j\in[k]$ the induced subgraph $G[N(J)]$ satisfies
\begin{equation}\label{claim1:d}
		\delta(G[N(J)])
		\ge
		\left(\frac {k-j}{k-j+1}+\eps \right)|N(J)|
	\end{equation}
	for every $J\in V^{(j)}$.
	\end{lemma}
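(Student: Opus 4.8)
The plan is to prove both inequalities by a short induction on $j$, using only the minimum degree hypothesis and the elementary inclusion–exclusion bound $|A\cap B|\ge |A|+|B|-n$ for subsets of an $n$-vertex set. For \eqref{claim1:N} the base case $j=1$ is immediate from $\delta(G)\ge(\tfrac{k}{k+1}+\eps)n$. For the inductive step, write $J=J'\cup\{u\}$ with $|J'|=j-1$; then $N(J)=N(J')\cap N(u)$, so
\[
  |N(J)|\ge |N(J')|+|N(u)|-n
  \ge\left(\frac{k+1-(j-1)}{k+1}+(j-1)\eps\right)n+\left(\frac{k}{k+1}+\eps\right)n-n,
\]
and the right-hand side simplifies to $\bigl(\tfrac{k+1-j}{k+1}+j\eps\bigr)n$, which is exactly what is claimed. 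I would carry this out for all $j\in[k+1]$; note that at $j=k+1$ the bound reads $|N(J)|\ge (k+1)\eps n>0$, consistent with the fact that a $(k+1)$-clique can still have common neighbours.

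For \eqref{claim1:d}, fix $J\in V^{(j)}$ with $j\in[k]$ and a vertex $w\in N(J)$. The neighbourhood of $w$ inside $G[N(J)]$ is $N(J)\cap N(w)=N(J\cup\{w\})$, and since $w\notin J$ this is the joint neighbourhood of a $(j+1)$-element set, so by \eqref{claim1:N} applied with $j+1$ in place of $j$ we get
\[
  \deg_{G[N(J)]}(w)=|N(J\cup\{w\})|\ge\left(\frac{k-j}{k+1}+(j+1)\eps\right)n.
\]
It remains to check that the last quantity is at least $\bigl(\tfrac{k-j}{k-j+1}+\eps\bigr)|N(J)|$. Using the upper bound $|N(J)|\le n$ it suffices to verify
\[
  \frac{k-j}{k+1}+(j+1)\eps\ \ge\ \left(\frac{k-j}{k-j+1}+\eps\right)\frac{|N(J)|}{n},
\]
and since $\frac{k-j}{k-j+1}\ge\frac{k-j}{k+1}$ one cannot simply use $|N(J)|\le n$ directly; instead I would feed in the lower bound $|N(J)|\ge\bigl(\tfrac{k+1-j}{k+1}+j\eps\bigr)n$ from \eqref{claim1:N} on the right and reduce to an inequality among the explicit constants. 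The cleanest route is to observe that $\tfrac{k-j}{k-j+1}\cdot\tfrac{k+1-j}{k+1}=\tfrac{k-j}{k+1}$, so the ``main terms'' match exactly, and then the $\eps$-terms satisfy $(j+1)\eps\ge\eps\cdot 1+\tfrac{k-j}{k-j+1}\cdot j\eps$ because $\tfrac{k-j}{k-j+1}<1$, which gives the claim with room to spare. One should double-check that the bound $|N(J)|\le n$ is harmless on the error term, i.e. that $\bigl(\tfrac{k-j}{k-j+1}+\eps\bigr)\cdot j\eps\cdot\tfrac{|N(J)|}{n}\le j\eps$ — true since the bracket is at most $1$.

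The only genuine obstacle is the bookkeeping in the last paragraph: the ratio $|N(J)|/n$ appears on the right-hand side of \eqref{claim1:d} and must be controlled from both sides simultaneously, so one has to be slightly careful to pair the main term of \eqref{claim1:N} with the factor $\tfrac{k-j}{k-j+1}$ exactly rather than wastefully bounding $|N(J)|\le n$. Everything else is a routine induction. I would present \eqref{claim1:N} first as a standalone claim, prove it by the inclusion–exclusion induction above, and then derive \eqref{claim1:d} from it in a couple of lines.
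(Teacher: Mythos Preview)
Your argument for~\eqref{claim1:N} is correct and essentially the same as the paper's: the paper uses the union bound $|V\sm N(J)|\le \sum_{u\in J}|V\sm N(u)|$ to get the bound in one step, while you unfold this into an induction on $j$; the two are equivalent.

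Your argument for~\eqref{claim1:d}, however, has a genuine gap. You bound $\deg_{G[N(J)]}(w)=|N(J\cup\{w\})|$ below by $(\tfrac{k-j}{k+1}+(j+1)\eps)n$ via~\eqref{claim1:N} for $j+1$, and then try to show this exceeds $(\tfrac{k-j}{k-j+1}+\eps)|N(J)|$ by ``feeding in the lower bound'' $|N(J)|\ge(\tfrac{k+1-j}{k+1}+j\eps)n$ on the right. But this substitution goes the wrong way: replacing $|N(J)|$ by a lower bound makes the right-hand side smaller, so proving the resulting inequality among constants does not prove the original one. Concretely, take $k=2$, $j=1$, and suppose the single vertex in $J$ is adjacent to all of $V$, so $|N(J)|=n-1$. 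Your lower bound gives only $\deg_{G[N(J)]}(w)\ge(\tfrac13+2\eps)n$, while the target is $(\tfrac12+\eps)(n-1)$; for small $\eps$ the former is strictly smaller. The point is that~\eqref{claim1:N} with $j+1$ throws away the actual size of $|N(J)|$, so it cannot yield a bound that scales correctly with $|N(J)|$ when the latter happens to be large.

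The fix, which is what the paper does, is to keep the $|N(J)|$-dependence explicit: use $|N(w)\cap N(J)|\ge |N(w)|+|N(J)|-n\ge \delta(G)-(n-|N(J)|)$ directly, so that
\[
	\frac{\deg_{G[N(J)]}(w)}{|N(J)|}\ge 1-\frac{n-\delta(G)}{|N(J)|}\,.
\]
Now the lower bound on $|N(J)|$ from~\eqref{claim1:N} is used in the denominator on the right, where it goes the correct direction, and one computes $1-\frac{1/(k+1)-\eps}{(k+1-j)/(k+1)}\ge \tfrac{k-j}{k+1-j}+\eps$.
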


\begin{proof}
	First observe that De Morgan's law and Boole's inequality imply 
	\[
		n-\big|N(J)\big|
		=
		\big|V\setminus N(J)\big|
		=
		\bigg|\bigcup_{u\in J}\big(V\sm N(u)\big)\bigg|
		\leq
		jn-\sum_{u\in J}\big|N(u)\big|\,.
	\]
Therefore, 
\begin{multline*}
		|N(J)| 
		\ge 
		\sum_{u\in J}\big|N(u)\big| - (j-1)n 
		\ge 
		j \delta(G) - (j-1)n\\
		\ge
		\Big(\frac{jk}{k+1}+j\eps\Big)n-(j-1)n
		=
		\Big(\frac{jk-(j-1)(k+1)}{k+1}+j\eps\Big)n
		\,,
	\end{multline*}
which yields~\eqref{claim1:N}.

	Proceeding with~\eqref{claim1:d} we note that every $v\in N(J)$ satisfies
\[
		\big|N(v)\cap N(J)\big|
		\ge
		\delta(G)-\big(n-|N(J)|\big)
		\geq
		\left(1 - \frac{n-\delta(G)}{|N(J)|}\right)|N(J)|\,.
	\]
Owing to the lower bound on $\delta(G)$
	and that \eqref{claim1:N}  implies  
	$|N(J)|\geq \frac{k+1-j}{k+1}n$
	we deduce
\[
		\frac{\delta(G[N(J)])}{|N(J)|}
		\geq
		1 - \frac{n-\delta(G)}{|N(J)|}
		\geq 
		1-\frac{1-\frac{k}{k+1}-\eps}{\frac{k+1-j}{k+1}}
		\geq 
		1-\frac{1}{k+1-j}+\eps
		=
		\frac{k-j}{k+1-j}+\eps\,,
	\]
	as desired.
\end{proof}

\subsection{Janson's inequalities}\label{sec:gnp}
We shall use the following variant of Janson's inequality~\cite{J90} (see also~\cite{JLR-ineq}). 
\begin{theorem}[Janson's inequality] 
	\label{thm:janson}
Let $\rho>0$ and $C>1$ be constants. Let $F=(V_F,E_F)$ be a forest 
and let~$\cF$ be a family of copies of $F$ in $K_n$ with 
$|\cF|\ge\rho n^{|V_F|}$. 

There exists some constant $c_F$ only depending on $F$ such that for $p\geq C/n$ the 
probability that $G(n,p)$ contains no copy of 
$F$ from $\cF$ is at most $2^{-c_F\rho^2pn^2}$.\qed
\end{theorem}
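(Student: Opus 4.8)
The plan is to deduce this variant from the standard Janson inequality, whose conclusion bounds the probability that $G(n,p)$ avoids all members of $\cF$ by $e^{-\mu+\Delta/2}$, where $\mu=\sum_{F'\in\cF}p^{|E_F|}$ and $\Delta=\sum p^{|E_F\cup E_{F''}|}$ runs over ordered pairs of members of $\cF$ sharing at least one edge. The key structural input is that $F$ is a \emph{forest}, so $|E_F|=|V_F|-c$ where $c$ is the number of components, and in particular $|E_F|\le|V_F|-1$. This is what makes $\mu$ large: since $|\cF|\ge\rho n^{|V_F|}$ and $p\ge C/n$, we get $\mu\ge\rho n^{|V_F|}p^{|E_F|}\ge\rho n^{|V_F|}p^{|V_F|-1}\cdot(\text{correction})$; being careful, $\mu = |\cF|p^{|E_F|}\ge \rho n^{|V_F|}p^{|V_F|}p^{-c}\ge \rho (pn)^{|V_F|}\ge \rho C\cdot (pn)$, but in fact the cleaner bound is $\mu\ge\rho n p^2 n^2 /$\,(something); the honest route is $\mu\ge\rho n^{|V_F|}p^{|E_F|}$ and then use $n^{|V_F|}p^{|E_F|}\ge n^2p^2\cdot n^{|V_F|-2}p^{|E_F|-2}\ge n^2p^2\cdot C^{|E_F|-2}$ since each extra factor $np\ge C>1$ — so $\mu\ge \rho C^{|E_F|-2} pn^2 p$; one must simply track that $\mu\ge c_1(F)\rho\, pn^2\cdot p \ge c_1(F)\rho\, p^2n^2$ using $|E_F|\ge |V_F|-c\ge |V_F|-|V_F|$... here I should be more careful, but the point is that $\mu$ is at least a constant (depending on $F$) times $\rho p^2n^2$, and typically much larger.

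Next I would bound $\Delta$. A pair $(F',F'')$ contributing to $\Delta$ shares at least one edge, hence at least two vertices; write $a=|V_{F'}\cap V_{F''}|\ge 2$. The number of such ordered pairs with a prescribed intersection pattern is $O(n^{2|V_F|-a})$, and for each the weight is $p^{|E_{F'}\cup E_{F''}|}\le p^{2|E_F|-b}$ where $b=|E_{F'}\cap E_{F''}|$ is the number of shared edges. Since $F$ is a forest, the shared sub-structure on $a$ vertices has at most $a-1$ edges, so $b\le a-1$, giving weight at most $p^{2|E_F|-a+1}$. Summing, $\Delta = O\big(\sum_{a\ge 2} n^{2|V_F|-a}p^{2|E_F|-a+1}\big) = O\big(n^{2|V_F|}p^{2|E_F|}\cdot p\sum_{a\ge 2}(np)^{-a}\big)$. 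Since $np\ge C>1$, the geometric-type sum $\sum_{a\ge 2}(np)^{-a}$ is $O\big((np)^{-2}\big)=O\big(n^{-2}p^{-2}\big)$, so $\Delta = O\big(n^{2|V_F|-2}p^{2|E_F|-1}\big)$. Comparing with $\mu^2 \ge \rho^2 n^{2|V_F|}p^{2|E_F|}$, we get $\Delta = O_F\big(\mu^2/(\rho^2 n^2 p)\big) = o(\mu)$ once we also use $\mu\gtrsim_F \rho p n^2 p$; more precisely one checks $\Delta/\mu = O_F(1/(np)) = O_F(1/C) \le 1$ for $C$ large, hence $\Delta\le\mu$, say, and certainly $\mu-\Delta/2\ge\mu/2$.

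Therefore Janson gives a failure probability at most $e^{-\mu+\Delta/2}\le e^{-\mu/2}$, and since $\mu\ge c(F)\rho p^2n^2$ for a suitable constant $c(F)$ (absorbing the dependence on $C$ into the constant is fine because $C$ is fixed, or one keeps the explicit $C^{|E_F|-2}$ factor), this is at most $2^{-c_F\rho^2 p n^2}$ with $c_F$ chosen appropriately — here I would simply set $c_F$ small enough to absorb the loss of the extra factor $\rho$ (we have $\mu\gtrsim \rho p^2n^2$, and $\rho\ge \rho^2$ may fail, but since $\rho\le 1$ we have $\rho\ge\rho^2$, so $\mu\gtrsim\rho^2 p^2 n^2$ holds too). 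The main obstacle, such as it is, is purely bookkeeping: getting the exponents right in the estimates for $\mu$ and $\Delta$ and verifying that the forest hypothesis is used exactly where claimed (namely $|E|\le|V|-1$ globally and $\le a-1$ on any $a$-vertex sub-intersection), and handling the degenerate possibility that $F$ has a single vertex (then $\cF$ is a set of vertices, $p^0=1$, $G(n,p)$ trivially contains a copy, and the statement is vacuous) — none of which presents a genuine difficulty.
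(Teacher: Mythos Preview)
The paper does not prove this theorem; it is stated with a \qed{} and cited from the standard references. So there is no ``paper's own proof'' to compare against, and I will simply assess your argument on its merits.

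Your overall plan is the right one: bound $\mu$ from below, bound $\Delta$ from above, and feed these into Janson's inequality. Your $\Delta$-estimate is essentially correct: using that any $b$ shared edges of two copies span at least $b+1$ shared vertices (this is exactly where the forest hypothesis enters), one gets
\[
	\Delta = O_F\bigl(n^{2|V_F|-2}\,p^{\,2|E_F|-1}\bigr)\,,
	\qquad\text{and hence}\qquad
	\frac{\mu^2}{\Delta}\ge c'_F\,\rho^2\,pn^2\,.
\]
Your discussion of the lower bound on $\mu$ is garbled, but the clean statement is simply $\mu\ge\rho n^{|V_F|}p^{|E_F|}=\rho\,(np)^{|E_F|}n^{c}\ge \rho\,(np)\cdot n=\rho\,pn^2$, where $c\ge 1$ is the number of components of $F$ and we assume $|E_F|\ge 1$ (the edgeless case being trivial).

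There is, however, a genuine gap. You assert that $\Delta/\mu=O_F(1/(np))$ and hence $\Delta\le\mu$, so that the basic Janson bound $e^{-\mu+\Delta/2}\le e^{-\mu/2}$ applies. This is false. From the bounds above,
\[
	\frac{\Delta}{\mu}
	=O_F\!\left(\frac{n^{|V_F|-2}p^{|E_F|-1}}{\rho}\right)
	=O_F\!\left(\frac{(np)^{|E_F|-1}\,n^{\,c-1}}{\rho}\right),
\]
which for any tree with at least two edges (e.g.\ a path on three vertices) grows like $(np)^{|E_F|-1}/\rho$ and is certainly not bounded. So the step ``$\Delta\le\mu$, hence $e^{-\mu/2}$'' does not go through.

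The fix is immediate and uses a bound you have already computed: invoke the \emph{extended} Janson inequality, which gives $\PP(X=0)\le e^{-\mu^2/(2\Delta)}$ whenever $\Delta\ge\mu$. Together with $\mu^2/\Delta\ge c'_F\rho^2pn^2$ this yields the claimed bound directly. In the complementary regime $\Delta<\mu$ one does get $\PP(X=0)\le e^{-\mu/2}\le e^{-\rho pn^2/2}\le e^{-\rho^2 pn^2/2}$ (using $\rho\le 1$), and the two cases combine to give $2^{-c_F\rho^2pn^2}$. Once you replace the incorrect ``$\Delta\le\mu$'' step by this case split (or simply quote the extended inequality), the argument is complete.
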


The following further customised version of Janson's inequality will be utilised in our proof in Sections~\ref{sec:connect} and~\ref{sec:reservoir}. Roughly speaking this version will guarantee
that $G(n,p)$ provides the missing edges of a $(k+1)$-path connecting two 
$(k+1)$-cliques $K$ and $K'$ provided the deterministic graph $G$ guarantees many short 
$k$-paths between $K$ and $K'$. 

\begin{corollary}\label{cor:janson2}
	For all integers  $k$, $\l\geq 0$ with $(k+1)\mid\l$ and $\rho>0$ there exists $C>0$ 
	such that for every $n$-vertex graph $G$ and $p\geq C/n$ the graph  
	$H=G\cup G(n,p)$ satisfies with probability at least $1-4^{-n}$  the following property:
	
	If for a pair of ordered, disjoint $(k+1)$-cliques $K$, $K'$~in $G$ there is a family $\cP$
	of at least $\rho n^{\l+2k+2}$ $k$-paths $P=x_1\dots x_{k+1}y_1\dots y_\l x'_1\dots x'_{k+1}$ 
	in $G$ such that $Kx_1\dots x_{k+1}$ and $x'_1\dots x'_{k+1}K'$ 
	form $(k+1)$-paths, then there is at least one $k$-path $P\in\cP$ such that 
	$KPK'$ forms a $(k+1)$-path in~$H$.
\end{corollary}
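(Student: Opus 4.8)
The plan is to deduce Corollary~\ref{cor:janson2} from the basic version of Janson's inequality (Theorem~\ref{thm:janson}) by a union bound over all relevant pairs of ordered $(k+1)$-cliques. First I would fix the ``missing-edge'' forest: given a $k$-path $P=x_1\dots x_{k+1}y_1\dots y_\l x'_1\dots x'_{k+1}$ with the property that $Kx_1\dots x_{k+1}$ and $x'_1\dots x'_{k+1}K'$ are $(k+1)$-paths, the edges needed to turn $KPK'$ into a $(k+1)$-path that are \emph{not} already present in $G$ form a fixed-size graph on the vertex set $V(K)\cup V(P)\cup V(K')$; since a $(k+1)$-path is chordal one checks that this graph of missing edges is always a forest $F$ (indeed one may even take $F$ to be a matching, but a forest is all Theorem~\ref{thm:janson} needs), and there are only finitely many isomorphism types of such $F$ as $P$ ranges over $\cP$. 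The number $|V_F|$ of vertices is bounded by $(k+1)+(\l+2k+2)+(k+1)=\l+4k+4$, a constant.

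Next I would pass to a fixed pair $(K,K')$ and a fixed type of missing-edge forest. Grouping the family $\cP$ by the type of $F$, some type occurs for at least $|\cP|/T\ge(\rho/T)n^{\l+2k+2}$ of the paths, where $T=T(k,\l)$ bounds the number of types. Each such path $P$ gives a copy of that $F$ in $K_n$ (on its ``free'' vertices, i.e.\ those not in $V(K)\cup V(K')$), and these copies are distinct for distinct $P$; so we have a family $\cF_{K,K'}$ of at least $(\rho/T)n^{\l+2k+2}\ge \rho' n^{|V_F|}$ copies of $F$ (using $|V_F|\le \l+2k+2+2(k+1)$ and absorbing the polynomial loss from the $2(k+1)$ pinned vertices into $\rho'$, which is legitimate since $|V_F|$ is exactly $\l+2k+2+(\text{number of clique vertices touched by a missing edge})$ and that last quantity is a constant). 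Applying Theorem~\ref{thm:janson} to $\cF_{K,K'}$ with parameter $\rho'$ yields a constant $c_F$ and the bound: the probability that $G(n,p)$ misses every copy of $F$ in $\cF_{K,K'}$ is at most $2^{-c_F(\rho')^2 pn^2}$. If $G(n,p)$ does contain one such copy, then together with the edges already in $G$ we get the required $(k+1)$-path $KPK'$ in $H$.

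Finally I would take the union bound over all $(K,K')$ and all forest types. There are at most $n^{2(k+1)}$ ordered pairs of disjoint $(k+1)$-cliques and at most $T$ types, so the failure probability is at most $T\,n^{2(k+1)}\,2^{-c_F(\rho')^2 pn^2}$. Since $p\ge C/n$, the exponent is at least $c_F(\rho')^2 Cn$, which for $C=C(k,\l,\rho)$ large enough dominates the polynomial factor $T n^{2(k+1)}$ and drives the whole bound below $4^{-n}$; this is exactly how $C$ is chosen. Here one uses that $c_F$ and the number of types depend only on $k$ and $\l$, while $\rho'=\rho'(k,\l,\rho)$, so the resulting $C$ legitimately depends only on $k$, $\l$, and $\rho$ as claimed.

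The only mildly delicate point is the verification that the missing-edge graph is always a forest (equivalently, has no short cycles) and that its isomorphism type ranges over a bounded set --- this is a finite case check that hinges on the chordality of powers of paths and on the fact that the two end-cliques $K$, $K'$ are required to already be cliques in $G$, so no edge \emph{inside} $V(K)$ or inside $V(K')$ is ever missing; hence every missing edge joins a free vertex to a free vertex or to a pinned vertex, and the free--free missing edges alone already form a forest while the pinned endpoints only add pendant vertices. Everything else is the standard ``Janson plus union bound'' bookkeeping, and the exponential gain $2^{-\Theta(pn^2)}=2^{-\Theta(n)}$ comfortably beats the $n^{O(1)}$ and $4^{-n}$ terms once $C$ is large.
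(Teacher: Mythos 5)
Your overall strategy (identify a forest of edges to be supplied by $G(n,p)$, count copies, invoke Theorem~\ref{thm:janson}) is the right one, but the paper proceeds more simply and thereby sidesteps the two places where your write-up is shaky. The paper does \emph{not} restrict attention to the edges actually missing from~$G$: it attaches to each $P\in\cP$ the \emph{full} ``gap forest'' $F_P$, namely all pairs $p_ip_{i+k+1}$ of vertices of~$P$ at $P$-distance exactly $k+1$. Since $(k+1)\mid\l$, this is a fixed linear forest of $k+1$ paths on $2+\l/(k+1)$ vertices each, spanning all $\l+2k+2$ vertices of~$P$. Whether some of these edges happen already to lie in~$G$ is irrelevant; if $G(n,p)\supseteq F_P$ then $KPK'$ is a $(k+1)$-path in~$H$ in any case. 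This gives a single forest (no pigeonhole over types), and because $F_P$ spans $V(P)$ and its linear-forest structure essentially encodes the vertex-order of~$P$, the map $P\mapsto F_P$ has fibers of bounded size at most $2^{k+1}(k+1)!$, so $|\cF|\geq|\cP|/\bigl(2^{k+1}(k+1)!\bigr)$ and Theorem~\ref{thm:janson} applies at once.

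Two points of your ``missing-edges-only'' variant would need repair. First, the claim that the copies of~$F$ are ``distinct for distinct $P$'' is false whenever $F$ does not span all $\l+2k+2$ internal vertices: many paths~$P$ differing only in positions untouched by a missing edge map to the same labeled copy, and indeed if $|V_F|<\l+2k+2$ then the number of copies cannot exceed $n^{|V_F|}\ll n^{\l+2k+2}$. You would have to replace this by a multiplicity bound (each copy arises from at most $O(n^{\l+2k+2-|V_F|})$ paths of the given type), which correctly recovers $|\cF_{K,K'}|\ge\rho'n^{|V_F|}$. Second, you allow for missing edges touching $V(K)\cup V(K')$, but these never occur: $K$ and $K'$ are cliques, $Kx_1\dots x_{k+1}$ and $x'_1\dots x'_{k+1}K'$ are $(k+1)$-paths in~$G$, and the $K$-to-$K'$ positions are at distance $>k+1$, so every edge of the target $(k+1)$-path incident with a pinned vertex is already in~$G$ and the only edges in doubt are the $(k+1)$-distance edges strictly inside~$P$. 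This matters because if $|V_F|$ could exceed $\l+2k+2$ then ``absorbing the polynomial loss into $\rho'$'' would fail outright---no choice of $\rho'>0$ makes $\rho n^{\l+2k+2}\geq\rho'n^{|V_F|}$ hold for large~$n$ when $|V_F|>\l+2k+2$. The final union bound over $(K,K')$ that you perform is fine, although the paper itself proves only the per-pair estimate and leaves the union bound to the applications of the corollary.
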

\begin{proof}
	Let $F$ denote the linear forest on $\l+2k+2$ vertices consisting of $k+1$
	disjoint paths on $2+\l/(k+1)$ vertices each. For each $P\in \cP$ there is a copy 
	$F_P$ of $F$ such that the union $P\cup F_P$ forms a $(k+1)$-path connecting 
	$K$ with $K'$ (see Figure~\ref{fig:completion}). 
	
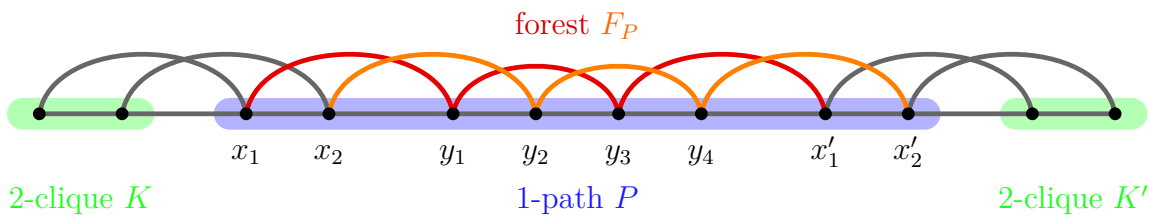
\begin{figure}[ht]
\centering
\begin{tikzpicture}[scale=1.1]
	\coordinate (a1) at (0,0);
	\coordinate (a2) at (1,0);
	\coordinate (x_1) at (2.5,0);
	\coordinate (x_2) at (3.5,0);
	\coordinate (y_1) at (5,0);
	\coordinate (y_2) at (6,0);
	\coordinate (y_3) at (7,0);
	\coordinate (y_4) at (8,0);
	\coordinate (x'_1) at (9.5,0);
	\coordinate (x'_2) at (10.5,0);
	\coordinate (b1) at (12,0);
	\coordinate (b2) at (13,0);
	
	\begin{pgfonlayer}{front}
		\foreach \i in {a1, a2, x_1, x_2, y_1, y_2, y_3, y_4, x'_1, x'_2, b1, b2}
			 \fill  (\i) circle (2.2pt);
		\foreach \i in {x_1, x_2, y_1, y_2, y_3, y_4, x'_1, x'_2}	 
			\node[below, style={font=\vphantom{\Large $K'_2$}}] at (\i) {$\i$};  
		\node[green!90!white, style={font=\vphantom{\Large $K'_2$}}] at (0.5,-1) {$2$-clique $K$};
		\node[green!90!white, style={font=\vphantom{\Large $K'_2$}}] at (12.5,-1) {$2$-clique $K'$};
		\node[blue!90!white, style={font=\vphantom{\Large $K'_2$}}] at (6.5,-1.0) {$1$-path $P$};
		\node[style={font=\vphantom{\Large $K'_2$}}] at (6.5,1.1) {\textcolor{red!80!black}{forest} 
			\textcolor{red!60!yellow}{$F_P$}};
	\end{pgfonlayer}
	
	\draw [line width=12pt, blue!30!white, rounded corners=5pt, line cap=round]
		($(x_1)+(180:0.2)$) -- ($(x'_2)+(0:0.2)$);
		
	\draw [line width=12pt, green!30!white, rounded corners=5pt, line cap=round]
		($(a1)+(180:0.2)$) -- ($(a2)+(0:0.2)$);
	\draw [line width=12pt, green!30!white, rounded corners=5pt, line cap=round]
		($(b1)+(180:0.2)$) -- ($(b2)+(0:0.2)$);
	
	\draw[black!60!white, line width=1.8pt] (a1) -- (b2);
	\draw[black!60!white, line width=1.8pt] (a1) edge[out=80,in=100] (x_1);
	\draw[black!60!white, line width=1.8pt] (a2) edge[out=80,in=100] (x_2);
	\draw[black!60!white, line width=1.8pt] (x'_1) edge[out=80,in=100] (b1);
	\draw[black!60!white, line width=1.8pt] (x'_2) edge[out=80,in=100] (b2);
	
	\draw[red!90!black, line width=1.8pt] (x_1) edge[out=80,in=100] (y_1);
	\draw[red!90!black, line width=1.8pt] (y_1)	edge[out=80,in=100] (y_3);
	\draw[red!90!black, line width=1.8pt] (y_3)	edge[out=80,in=100] (x'_1);
	
	\draw[red!50!yellow, line width=1.8pt] (x_2) edge[out=80,in=100] (y_2);
	\draw[red!50!yellow, line width=1.8pt] (y_2)	edge[out=80,in=100] (y_4);
	\draw[red!50!yellow, line width=1.8pt] (y_4)	edge[out=80,in=100] (x'_2);
	
\end{tikzpicture}
\caption{For $k=1$ and $\l=4$ completing a $1$-path $P$ to a $2$-path 
	with a linear forest $F_P$ consisting of two $3$-edge paths.}
\label{fig:completion}
\end{figure}
	
	We estimate the probability that at least one of them is a subgraph of $G(n, p)$.
	Setting $\cF=\{F_P\colon P\in \cP\}$ we 
	have~$|\cF|=|\cP|\ge \rho n^{\l+2k+2}$, Theorem~\ref{thm:janson}
	shows that for $C\geq 2c^{-1}_F\rho^{-2}$ this leads to 
\[
		\PP\big(F_P\not\subseteq G(n, p) \text{ for all } F_P\in \cF\big)
		\le 
		4^{-n}\,,
	\]
and the corollary is proved.
\end{proof}

\section{Proof of the Connecting Lemma} \label{sec:connect}
In this section we establish Proposition~\ref{prop:connect}.
For that we first prove a deterministic lemma (see Lemma~\ref{lem:dcon}), which guarantees 
many short
$k$-paths between every pair of disjoint $k$-cliques in large graphs $G$ with 
sufficiently high minimum degree. Similar results appeared before in~\cites{HJS,KSS98b}.
We shall employ this result in the proof of
Proposition~\ref{prop:connect}, where at least one of these $k$-paths will be `thickened' to a
$(k+1)$-path by an application of Janson's inequality in the form of Corollary~\ref{cor:janson2}. 

In Lemma~\ref{lem:dcon} below 
it will be convenient to consider \emph{$k$-walks}, which are defined like $k$-path, without the restriction 
that all vertices must be distinct. However, since we consider only graphs without loops, any $k$ consecutive vertices in a $k$-walk must be distinct. As in the case of $k$-paths we say a walk connects the ordered 
$k$-cliques forming the ends of the walk and internal vertices are counted with their
multiplicities (outside the ends).
\begin{lemma}\label{lem:dcon} 
	For every integer $k\ge1$ and $\eps>0$ there exists some $\rho_k>0$ such that 
	every $n$-vertex graph $G$ with $\delta(G)\geq (\tfrac{k}{k+1}+\eps)n$
	satisfies the following for $\l_k=(k+1)(2^{k+1}-2)$.
	
	For all pairs of disjoint, ordered $k$-cliques $K$, $K'$ in $G$ 
	the number of $k$-walks connecting~$K$ and~$K'$ with $\ell_k$ internal vertices 
	is at least $\rho_k n^{\ell_k}$.
\end{lemma}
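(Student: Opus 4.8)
The plan is to argue by induction on $k$, using Lemma~\ref{lem:scale} to descend into a neighbourhood at the inductive step. \emph{Base case $k=1$.} Here a $1$-walk is an ordinary walk and $K=(u)$, $K'=(v)$ are single vertices, so we must count walks $u\,w_1w_2w_3w_4\,v$. Since $\delta(G)>n/2$, any two vertices have at least $2\delta(G)-n\ge 2\eps n$ common neighbours. Choosing $w_1\in N(u)$, $w_2\in N(w_1)$, $w_4\in N(v)$, and finally $w_3\in N(w_2)\cap N(w_4)$ — in this order — always produces such a walk, and distinct quadruples yield distinct walks, so there are at least $\delta(G)^3\cdot 2\eps n\ge\tfrac{\eps}{4}n^4$ of them; hence $\rho_1=\eps/4$ works.

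\emph{Inductive step.} Fix disjoint ordered $k$-cliques $K=(a_1,\dots,a_k)$ and $K'=(a'_1,\dots,a'_k)$ with $k\ge 2$. The crux of the difficulty is that one cannot simply build a generic $k$-walk forwards from $K$ and ``land'' it on $K'$: the last internal vertex would be forced into the joint neighbourhood of all of $K'$ together with its $k$ predecessors — of $2k$ vertices — which by \eqref{claim1:N} can be empty once $k\ge 2$. One gets around this by routing the walk through the neighbourhood of an auxiliary vertex. For any $z\in V\setminus(V(K)\cup V(K'))$ one has $|N(z)|\ge\tfrac k{k+1}n$, and by \eqref{claim1:d} with $j=1$ the graph $G[N(z)]$ has minimum degree at least $(\tfrac{k-1}{k}+\eps)|N(z)|$, so the inductive hypothesis applies inside it. The basic observation is that if the vertex sequence of a $(k-1)$-walk in $G[N(z)]$ is cut into blocks of $k$ consecutive vertices and a copy of $z$ is inserted after each block, the result is a $k$-walk in $G$: every $(k+1)$-window then contains exactly one $z$, which is adjacent to the remaining $k$ vertices (they lie in $N(z)$), and those $k$ are $k$ consecutive vertices of the $(k-1)$-walk and hence a $k$-clique.

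Concretely one assembles a connecting $k$-walk in stages: (i) from $K$, build a short $k$-walk $a_1\dots a_k\,b_1\dots b_k$ in which $b_j$ is taken from $N(a_j,\dots,a_k,b_1,\dots,b_{j-1},z)$ — a joint neighbourhood of exactly $k+1$ vertices, so of size at least $(k+1)\eps n$ by \eqref{claim1:N} — ending at a $k$-clique $B=(b_1,\dots,b_k)\subseteq N(z)$; symmetrically build a short $k$-walk ending at a $k$-clique $B'\subseteq N(z)$ using fresh vertices; (ii) apply the inductive hypothesis in $G[N(z)]$ to connect the $(k-1)$-clique $(b_2,\dots,b_k)$ to a suitable $(k-1)$-subclique of $B'$ by a $(k-1)$-walk with $\l_{k-1}$ internal vertices, inflate it by $z$-insertions as above, and splice it between $B$ and $B'$; (iii) insert a stretch of free vertices into the initial segment — each chosen from the joint neighbourhood of its $k$ predecessors, which is nonempty and linear in $n$ by \eqref{claim1:N} — so as to reach exactly $\l_k$ internal vertices, which is possible because $(k+1)(2^{k+1}-2)$ comfortably exceeds what the rest of the construction consumes.

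\emph{The main obstacle.} The real work lies in the count. One multiplies the freedoms — $\Omega(n)$ choices of $z$, at least $\tfrac\eps2 n$ choices for each transition or padding vertex, and at least $\rho_{k-1}|N(z)|^{\l_{k-1}}\ge\rho_{k-1}\bigl(\tfrac k{k+1}\bigr)^{\l_{k-1}}n^{\l_{k-1}}$ choices for the inner $(k-1)$-walk, distinct data giving distinct walks — and must check that the resulting exponent of $n$ is at least $\l_k$. Getting this balance right (the repeated copies of $z$ consume exponent, while the padding and the inductive count must supply it) is the delicate point, and it is what pins down the value $\l_k=(k+1)(2^{k+1}-2)$; once it checks out one reads off $\rho_k$ as the product of $\rho_{k-1}$ with the various absolute constants. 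I expect this bookkeeping — rather than any single geometric move — to be the part demanding the most care, and it may well force a more economical arrangement of the construction than the naive one sketched above, keeping the number of ``rigid'' positions small relative to the generous budget $\l_k$.
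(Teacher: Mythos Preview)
Your base case is fine and matches the paper. The inductive step, however, has a genuine gap that is not just bookkeeping. When you inflate the inner $(k-1)$-walk by inserting the \emph{same} vertex $z$ after every block of $k$ vertices, you create $m$ internal positions (with $m\approx 2^k\ge 2$) that all carry the single choice of $z$. Thus the number of internal vertices of your final $k$-walk is $2k+\ell_{k-1}+m+p$ (transitions, inner walk, $z$-copies, padding), while the number of free parameters is only $1+2k+\ell_{k-1}+p$. Setting the former equal to $\ell_k$ forces the latter to be $\ell_k-m+1$, so your count is $O(n^{\ell_k-m+1})=o(n^{\ell_k})$ whenever $m\ge 2$. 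Padding cannot rescue this: each padding vertex adds exactly one to \emph{both} the length and the exponent, so the deficit $m-1$ is invariant under padding. Your closing remark that the balance ``may well force a more economical arrangement'' is on the right track, but no rearrangement that keeps a single auxiliary vertex $z$ can work: one insertion of $z$ is not enough to promote a $(k-1)$-walk to a $k$-walk, and two or more insertions already lose.

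The missing idea is to swap the order of quantifiers. Rather than fixing $z$ first and then applying induction inside $N(z)$, one counts pairs $(u,P)$ with $P$ a $(k-1)$-walk lying entirely in $G[N(u)]$; there are $\Omega(n^{1+\ell_{k-1}})$ such pairs, and since there are at most $n^{\ell_{k-1}}$ candidate walks, an averaging argument yields $\Omega(n^{\ell_{k-1}})$ walks $P$ each contained in $G[N(u)]$ for a \emph{linear} set $U^P$ of vertices $u$. One then inserts \emph{distinct} vertices of $U^P$ at the $2^k-1$ interpolation points, so every inserted vertex contributes a full factor of $n$. The paper carries this out by routing through an intermediate $k$-clique $M$ (found by supersaturation in common neighbourhoods) and performing this ``fix $P$, then vary $u$'' trick on each half; an extra greedy extension of $K'$ by $k$ vertices takes care of the divisibility $k\mid \ell_{k-1}+2(k-1)$ that your block-cutting also runs into.
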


\begin{proof}
	We argue by induction on $k$. For $k=1$ we have $\ell_1=4$ and the statement reduces to 
	showing that any two 
	distinct vertices $x$ and $y$ of an $n$-vertex graph $G$ with minimum degree 
	$\delta(G)\ge\bigl(\frac12+\eps\bigr)n$ are connected by $\rho_1 n^4$
	walks with four internal vertices for some $\rho_1=\rho_1(\eps)>0$. The minimum degree condition 
	implies that there are at least $(1/2+\eps)^3n^3$ walks with three edges that start in~$x$.
	Moreover, by~\eqref{claim1:N} for $j=2$ the end-vertex of each such walk has at least $2\eps n$ joint neighbours with~$y$, which gives 
	rise to at least $2\eps(1/2+\eps)^3n^4$ different $x$-$y$-walks in~$G$ with four internal vertices. 
	This establishes the induction start for~$\rho_1=\eps/4$.

	For the inductive step we assume that the lemma holds for $k-1$ in place of $k\ge 2$ and we consider  
	a given $n$-vertex graph $G=(V,E)$ with $\delta(G)\geq (\tfrac{k}{k+1}+\eps)n$. Given $\eps>0$ we will 
	use some auxiliary constants $\xi$, $\xi'$, $\xi''$, and $\xi'''$ before we define $\rho_k$. 
	Moreover, given $\rho_{k-1}$ by the 
	inductive assumption applied with $\eps$, we shall work under the following hierarchy of constants 
	\[
		k^{-1},\eps\gg \rho_{k-1},\xi\gg \xi'\gg \xi'' \gg \xi''' \gg \rho_k\,.
	\]
	
	First we observe that for any $u$, $w\in V(G)$ the case $j=2$ of~\eqref{claim1:N} and~\eqref{claim1:d} yields
\[
		|N(u, w)|\ge\frac{k-1}{k+1}n
		\qand
		e(N(u,w)) 
		\ge 
		\left( \frac{k-2}{k-1} + \eps \right) \frac{|N(u,w)|^2}{2}\,. 
	\]
Hence, it follows from Tur\'an's theorem that $G[N(u, w)]$ induces a copy of $K_k$ and owing 
	to the so-called \emph{supersaturation phenomenon} (see, e.g.,~\cite{ES1983})
	the induced subgraph $G[N(u, w)]$ contains $\Omega(|N(u, w)|^k)=\Omega(n^k)$ 
	copies of~$K_k$. Consequently, there exists $\xi=\xi(k,\eps)>0$ such that 
	\begin{equation}\label{eq:jcliques}
		\big|\big\{K_k\subseteq G[N(u,w)]\big\}\big|\geq \xi n^k\,,
	\end{equation}
	i.e., there are at least $\xi n^k$ copies of $K_k$ contained in~$G[N(u,w)]$ for 
	any vertices $u$, $w\in V$.

	We consider two disjoint, ordered 
	$k$-cliques $K$ and $K'$. As a preliminary step we first extend $K'$  in a greedy manner by $k$ vertices. 
	(This seems like an unnecessary step but it is needed to fulfil a certain divisibility 
	condition at the end of this proof.) The total number of these extensions is, by $k$ applications 
	of~\eqref{claim1:N} with $j=k$, at least
	\begin{equation}\label{eq:cL'}
		\bigg(\Big( \frac{1}{k+1} + k\eps \Big)n\bigg)^k \ge \Big(\frac{n}{k+1}\Big)^k\,,
	\end{equation}
	as we do not require that all these vertices are distinct from those in $K$ or $K'$. Let $\cL'$ be the set of 
	all ordered $k$-tuples obtained this way. By construction for every $L'\in\cL'$ we have that 
	$L'K'$ induces a $k$-walk connecting $L'$ and $K'$ without internal vertices.

	Next we connect $K$ with every $L'\in\cL'$ by a $k$-walk. Again we infer from~\eqref{claim1:N} that we have 
	$|N(V(K))|\ge \frac n{k+1}$ and $|N(V(L'))|\ge \frac n{k+1}$. It, therefore, follows from~\eqref{eq:jcliques}
	that
	\[
		\sum_{u\in N(V(K))}\sum_{w\in N(V(L'))}  \big|\big\{M: M\cong K_k \text{ and } M\subseteq G[N(u,w)]\big\}\big|
		\ge 
		\frac{\xi n^{k+2}}{(k+1)^2}\,.
	\]
	By a double counting argument, this implies that there are at least $\xi'n^k$ $k$-cliques $M$ in $G$
	for which 
	\[
	\big|\big\{ (u,w)\in N(V(K))\times N(V(L')) :   M\subseteq G[N(u,w)]\big\}\big| \ge \xi'n^2.
	\]
For fixed such $M$ we let $U_{K}$ denote the set of those vertices $u\in N(V(K))$ 
	that belong to at least one such pair and let $W_{L'}\subseteq N(V(L'))$
	be defined in the same way. Clearly, we have $|U_{K}|$, $|W_{L'}|\ge \xi'n$.

	We connect $K$ and $L'$ by  $k$-walks through $M$. For the $k$-walk  connecting $K$ and $M$ 
	we shall use the properties of $u\in U_{K}$ and, analogously, we rely on the  properties of 
	$w\in W_{L'}$ for the $k$-walk connecting $M$ and $L'$. 
	Recall that for every $u\in U_{K}$ we have $K\subseteq G[N(u)]$,
	$M\subseteq G[N(u)]$ and an application of~\eqref{claim1:d} with $j=1$ gives
	\[
		\delta(G[N(u)])
		\ge
		\Big(\frac {k-1}{k}+\eps \Big)|N(u)|\,.
	\]
	Thus, by the inductive assumption, there are at least $\rho_{k-1}n^{\ell_{k-1}}$ $(k-1)$-walks  
	connecting the last $k-1$ vertices of $K$
	and the first $k-1$ vertices of $M$ and each such walk has $\ell_{k-1}$ internal vertices. 
	Let $K_+\subseteq K$ be the ordered $(k-1)$-clique spanned by the last $k-1$ vertices of~$K$ and
	let $M_-\subseteq M$ be the ordered $(k-1)$-clique spanned by the first $k-1$ vertices of~$M$.
	Repeating this argument for every  vertex $u\in U_{K}$ 
	we obtain at least 
	\[
		|U_{K}|\cdot\rho_{k-1}n^{\ell_{k-1}}
		\ge 
		\xi'\rho_{k-1}n^{1+\ell_{k-1}}
		=
		\xi''n^{1+\ell_{k-1}}
	\]
	pairs $(u,P)$ where $u\in U_{K}$ and $P$ is a $(k-1)$-walk connecting $K_+$ and $M_-$ 
	in~$G[N(u)]$. As there are no more than $n^{\ell_{k-1}}$ such walks in $G$, 
	there are at least $\tfrac12\xi''n$ vertices $u\in U_{K}$ for which~$G[N(u)]$
	contains at least 
	$\tfrac12\xi''n^{\ell_{k-1}}$ of these walks.
Let us fix one such  $(k-1)$-walk~$P$ and  denote by $U^P_{K}$ the subset of $U_{K}$ consisting of the 
	vertices~$u$ such that~$P\subseteq G[N(u)]$. Next we construct a $k$-walk $Q$ from $K$ to $M$ by 
	inserting 
	\[
		\frac{\ell_{k-1}}{k}+1=2^{k}-1
	\]
	vertices from $U^P_{K}$ into $P$ in such a way that there are 
	exactly~$k$ internal vertices of the $(k-1)$-walk $P$ between 
	each consecutive pair of the vertices of $U^P_{K}$ 
	(see Figure~\ref{fig:connecting}).

	\begin{figure}[ht]
\centering
\begin{tikzpicture}[scale=1.05]
	\coordinate (a1) at (0,0);
	\coordinate (a2) at (2,0);
	\coordinate (y_1) at (4,0);
	\coordinate (y_2) at (6,0);
	\coordinate (y_3) at (8,0);
	\coordinate (y_4) at (10,0);
	\coordinate (b1) at (12,0);
	\coordinate (b2) at (14,0);
	\coordinate (u1) at (3,2);
	\coordinate (u2) at (7,2);
	\coordinate (u3) at (11,2);
	
	\begin{pgfonlayer}{front}
		\foreach \i in {a1, a2, y_1, y_2, y_3, y_4, b1, b2, u1, u2, u3}
			 \fill  (\i) circle (2.2pt);  
		\node[green!90!white, style={font=\vphantom{\Large $K'_2$}}] at (1,-1) {$2$-clique $K$};
		\node[green!90!white, style={font=\vphantom{\Large $K'_2$}}] at (13,-1) {$2$-clique $M$};
		\node[blue!90!white, style={font=\vphantom{\Large $K'_2$}}] at (7,-1) {$1$-path $P$};
		\node[below,black, style={font=\vphantom{\Large $K'_2$}}] at (a2) {$K_+$};
		\node[below,black, style={font=\vphantom{\Large $K'_2$}}] at (b1) {$M_-$};
		\node[red!50!black] at (9,2) {$U_{K}^P$};
	\end{pgfonlayer}
	
	\draw[black!60!white, line width=1.8pt] (a1) -- (b2);
	
		\foreach \i in {a1, a2, y_1, y_2}
			\draw[black!60!white, line width=1.8pt] (u1) -- (\i);
		\foreach \i in { y_1, y_2, y_3, y_4}
			\draw[black!60!white, line width=1.8pt] (u2) -- (\i);	
		\foreach \i in {y_3, y_4, b1, b2}
			\draw[black!60!white, line width=1.8pt] (u3) -- (\i);

	\draw[red!75!black, line width=1.5pt] (u2) ellipse (4.8cm and 15pt);
	\fill[red!75!white,opacity=0.2] (u2) ellipse (4.8cm and 15pt);
	
	\draw [line width=12pt, blue!80!white, opacity=0.3, rounded corners=5pt, line cap=round]
		($(a2)+(180:4pt)$) -- ($(b1)+(0:4pt)$);	
	\draw [line width=12pt, green!80!white, opacity=0.3, rounded corners=5pt, line cap=round]
		($(a1)+(180:0.2)$) -- ($(a2)+(0:0.2)$);
	\draw [line width=12pt, green!80!white, opacity=0.3, rounded corners=5pt, line cap=round]
		($(b1)+(180:0.2)$) -- ($(b2)+(0:0.2)$);
		
	\draw[red!60!yellow, line width=1.2pt, dashed, line cap=round,->] (3,1.77) -- (3,0.25);
	\draw[red!60!yellow, line width=1.2pt, dashed, line cap=round,->] (7,1.77) -- (7,0.25);
	\draw[red!60!yellow, line width=1.2pt, dashed, line cap=round,->] (11,1.77) -- (11,0.25);

\end{tikzpicture}
\caption{Building a $2$-path~$Q$ for $k=1$ that connects 
	$K$ and $M$ by adding vertices from $U_{K}^P$ to a
	$1$-path $P$ connecting $K_+$ and $M_-$ at the indicated places.}
\label{fig:connecting}
\end{figure}
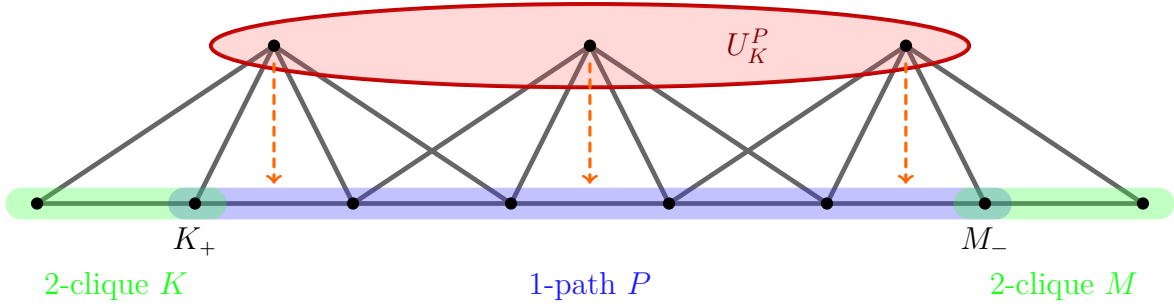

	Note that any such $k$-walk $Q$ created this way is indeed a $k$-walk
	connecting $K$ and $M$ including the first vertex of $K$ and the last vertex of $M$, 
	as every vertex $u\in U^P_{K}\subseteq U_{K}$ contains $K$ and $M$ in its neighbourhood.
	Note that this way we ensure the existence of at least 
	\[
		\frac12\xi''n^{\ell_{k-1}}\cdot \Big(\frac12\xi''n\Big)^{2^{k}-1} 
		= 
		\xi'''n^{\ell_{k-1}+2^{k}-1}
	\] 
	$k$-walks connecting $K$ and $M$.
	
	The same argument applied for $M$ and $L'$ (instead of $K$ and $M$) using the set $W_{L'}$ 
	yields $\xi'''n^{\ell_{k-1}+2^{k}-1}$ 
	$k$-walks connecting $M$ and $L'$. Consequently, for fixed $M$ and $L'$ we obtain 
	\[
		\big(\xi'''n^{\ell_{k-1}+2^{k}-1}\big)^2
	\] 
	$k$-walks connecting $K$ and $K'$ that pass through~$M$ and~$L'$. We recall that there are at least 
	$\xi'n^k$ choices for the clique~$M$ for fixed $L'\in\cL'$ and that $|\cL'|\geq \big(\frac{n}{k+1}\big)^k$
	(see~\eqref{eq:cL'}). Therefore, the number of $k$-walks connecting $K$ and $K'$ is at least
	\[
		\big(\xi'''\big)^2n^{2\ell_{k-1}+2^{k+1}-2}\cdot \xi'n^k\cdot \Big(\frac{n}{k+1}\Big)^k
		\geq 
		\rho_k n^{2\ell_{k-1}+2^{k+1}-2+2k}
		=
		\rho_k n^{\l_k}\,,
	\]
	where the last identity follows from $\ell_{k-1}=k(2^{k}-2)$, which gives indeed
	\[
		2\ell_{k-1}+2^{k+1}-2+2k
		=
		2k(2^k-2+1)+2^{k+1}-2
		=
		(k+1)(2^{k+1}-2)
		=\l_k\,.
	\]
	This concludes the inductive step and the proof of Lemma~\ref{lem:dcon}.
\end{proof}

It is left to deduce Proposition~\ref{prop:connect} from Lemma~\ref{lem:dcon}. Roughly speaking, 
Lemma~\ref{lem:dcon} verifies the assumptions of Corollary~\ref{cor:janson2}, which then 
guarantees that at least one given $k$-path will be enriched to a $(k+1)$-path
by the random graph $G(n,p)$.

\begin{proof}[Proof of Proposition~\ref{prop:connect}]
	Let $k\geq0$ and $\eps>0$ be given. If $k=0$, then we set $\rho_0=1$, 
	and for $k\geq 1$, we appeal to Lemma~\ref{lem:dcon} applied with $k$ and $\eps/2$ and obtain 
	a constant $\rho_k>0$.  We then let $C>1$ be given by Corollary~\ref{cor:janson2}
	applied with 
	\[
		k\,,\qquad
		\l=(k+1)(2^{k+1}-2)\,,\qqand
		\rho=\frac{1}{2^{\l+1}}\rho_k\cdot\Big(\frac{\eps}{2}\Big)^{2k+2}\,.
	\]
	Finally, let $G=(V,E)$ be an $n$-vertex 
	graph with $\delta(G)\geq\big(\frac{k}{k+1}+\eps\big)n$ and $p\geq C/n$.
	
	Consider a set $Z\subseteq V$ of size at most $\eps n/2$ and let $K$ and $K'$ be two disjoint, 
	ordered $(k+1)$-cliques in $G-Z$. 
In order to meet the assumptions of Corollary~\ref{cor:janson2} we first show that there are many ways to greedily extend~$K$ 
	and~$K'$ to $L=x_1\dots x_{k+1}$ and $L'=x'_1\dots x'_{k+1}$ and then we use Lemma~\ref{lem:dcon} to show that there are 
	many possibilities to connect~$L$ and~$L'$.
	
	We remedy this by first selecting 
	$(k+1)$-cliques $L$ and $L'$ in $G-Z$ such that $KL$ and $L'K$ form $(k+1)$-walks. 
	In fact, since $\delta(G-Z)\geq \big(\frac{k}{k+1}+\frac{\eps}{2}\big)n$ we infer that 
	$k+1$ applications of~\eqref{claim1:N} for $j=k+1$ in $G-Z$ give rise to at least 
	$(\eps n/2)^{k+1}$ such ordered $(k+1)$-cliques~$L$. Similarly, there are at least $(\eps n/2)^{k+1}$
	such ordered $(k+1)$-cliques $L'$. For two such ordered cliques $L$ and $L'$ let $L_+$ be 
	the last $k$ vertices in $L$ and let $L'_-$ be the first $k$ vertices in $L'$.

	For $k\geq 1$ the graph $G-Z$ satisfies the assumption of Lemma~\ref{lem:dcon} with~$\eps/2$ 
	instead of $\eps$ and, hence, the lemma yields $\rho_k |V\sm Z|^{\l}$ $k$-walks connecting $L_+$ 
	and~$L'_-$ in $G-Z$ with $\l=(k+1)(2^{k+1}-2)$ internal vertices. 
	For $k=0$ we have $\l=0$ and we note that for the $0$-cliques~$L_+$ and~$L'_-$   
	and the empty path might be considered as a $0$-path connecting those.  
	
	Consequently, for any value of $k$ there are $\rho_k |V\sm Z|^{\l}$ $k$-walks 
	connecting~$L_+$ and~$L'_-$ for all considered  $(k+1)$-cliques $L$ and $L'$. 
	Going over all such $(k+1)$-cliques $L$ and $L'$ this gives rise to 
	\[
		\Big(\frac{\eps}{2} n\Big)^{2k+2}\cdot \rho_k\Big(\frac{1}{2} n\Big)^\l
	\]
	such $k$-walks $x_1\dots x_{k+1}y_1\dots y_{\l}x'_1\dots x'_{k+1}$.
	Since at most $(2k+\l)\l n^{\l-1}$ of these $k$-walks may repeat a vertex, i.e., walks where the vertices $y_1,\dots,y_\l$
	are not pairwise different or  they are not distinct from $K$ or from $K'$,  for sufficiently large $n$, 
	we may assume that at least half of these $k$-walks are indeed $k$-paths disjoint from $K$ and $K'$. This verifies the 
	assumptions of Corollary~\ref{cor:janson2}, which with probability at least 
	$1-4^{-n}$ yields a desired $(k+1)$-path connecting $K$ and $K'$ in $H=G\cup G(n,p)$
	
	Finally, the union bound over up to at most $n^{2k+2}$ choices for $K$ and $K'$ 
	and at most $2^n$ choices for $Z$ shows that a.a.s.\  $H=G\cup G(n,p)$ 
	enjoys the conclusion of Proposition~\ref{prop:connect}.
\end{proof}

\section{Proof of the Reservoir Lemma} \label{sec:reservoir}

\begin{proof}[Proof of Proposition~\ref{prop:reservoir}]
	Consider a random subset $R\subseteq V$ with $|R|=\gamma^2 n$ chosen uniformly at 
	random. Since $\delta(G)\ge(\frac k{k+1}+\eps)n$, it follows from a version of Chernoff's
	inequality appropriate for hypergeometric distributions that for each vertex $v\in V$
	the bad event that $|N(v)\cap R|<(\frac k{k+1}+\frac\eps2)|R|$ holds has probability 
	$\eu^{-\Omega(n)}$. Thus, by the union bound, the probability that there exists some $v\in V$
	for which this bad event occurs is $o(1)$. 
	
	This proves, in particular, that there exists some set $R\subseteq V$ 
	with $|R|=\gamma^2 n$ and 
\begin{equation}\label{eq:Rmindeg}
		 |N(v)\cap R|\ge \left(\frac k{k+1}+\frac\eps2\right)|R|
		 \quad
		 \text{for every $v\in V$.} 
	\end{equation}

	For the rest of the proof we fix some such set $R\subseteq V$ having these properties.
	Notice that~\eqref{eq:Rmindeg} immediately entails that 
\begin{equation}\label{eq:RU}
		|N(J)\cap R|\ge \tfrac12 \eps |R| 
		\quad 
		\text{holds for all $J\in V^{(k+1)}$.}
	\end{equation}

	Let us now fix two ordered $(k+1)$-cliques $K$ and $K'$ in $G-R$ as well as a subset
	$S\subseteq R$ with $|S|\le \frac14\eps|R|$. Consider the bad event $\cE$ that there is no
	$(k+1)$-path in $H$ connecting~$K$ with $K'$ having 
\[
		\l=(k+1)2^{k+1}
	\]
internal vertices 
	all of which belong to $R\sm S$. It suffices to prove that
\begin{equation}\label{eq:PrE}
		\PP(\cE)\leq 4^{-n}\,.
	\end{equation}
This is because there are at most $n^{k+1}$ possibilities for each of $K$ and $K'$ 
	and at most $2^n$ possibilities for $S$, meaning that once~\eqref{eq:PrE} is established
	it will follow that the probability that $H$ fails to have the desired property 
	is at most $n^{2k+2}2^n\cdot o(4^{-n})=o(1)$, as desired. 
	
	For the proof of~\eqref{eq:PrE} we note that due to~\eqref{eq:RU} we can greedily
	extend $K$ to a $(k+1)$-path~$KL$, where $L$ denotes some ordered $(k+1)$-clique 
	in $G[R\sm S]$. More precisely, since $|S|\le \frac 14\eps |R|$ each vertex of
	such a clique $L$ can be chosen in at least $\frac 14\eps |R|$ many ways
	and thus the set $\cL$ containing all such cliques $L$ satisfies $|\cL|\ge (\frac 14\eps |R|)^{k+1}$.
	
	Applying the same reasoning to backwards extensions of $K'$ we infer that the set 
	$\cL'$ consisting of all ordered $(k+1)$-cliques $L'$ in $G[R\sm S]$ for which 
	$L'K'$ is a $(k+1)$-path in~$G$ has at least the size $|\cL'|\ge (\frac 14\eps |R|)^{k+1}$.
	
	Now let $\cP$ be the collection of all $k$-paths in $G[R\sm S]$ having $\l$
	vertices that start with a member of $\cL$ and end with a member of $\cL'$.
	To derive a lower bound on $|\cP|$ we note that as a consequence of~\eqref{eq:Rmindeg} 
	the graph $G[R\sm S]$ satisfies the assumptions of Lemma~\ref{lem:dcon} with~$\eps/4$ here in place of $\eps$ there. Thus for some sufficiently 
	small choice of $\rho_k>0$, Lemma~\ref{lem:dcon} guarantees that for every~$L\in \cL$ 
	and $L'\in\cL'$ there are at least 
	$\rho_k |R\sm S|^{\ell-2k-2}$ $k$-walks with $\ell-2k-2$ internal vertices connecting the last $k$ 
	vertices of $L$ 
	with the first $k$ vertices of~$L'$. 
	Without loss of generality we may assume that $\rho_k\ll\eps^{2k+2}/2^\l$ and 
	since most of these walks are indeed paths for sufficiently large~$n$, this shows that
\begin{equation}\label{eq:cPlarge}
		|\cP|\ge \frac{\rho_k}{2}|\cL||\cL'||R\sm S|^{\ell-2k-2}
		\ge 
		\frac{\rho_k}{2}\left(\frac{\eps}4\right)^{2k+2}\left(1-\frac\eps4\right)^{\ell} |R|^{\l}
		\ge
		\rho_k^2 |R|^{\l}\,.
	\end{equation}
Consequently, we can invoke Corollary~\ref{cor:janson2} for $\l$, $k$, and $\rho=\rho_k^2$, which 
	yields~\eqref{eq:PrE} and thereby Proposition~\ref{prop:reservoir} is proved.
	\end{proof}
	
\section{Proof of the Absorbing Lemma} \label{sec:absorb}
The present section is dedicated to the proof of Proposition~\ref{prop:absorbing}.
As in many earlier applications of the absorbing method the core idea is to take 
a random collection of $\Omega(n)$ small configurations called {\it absorbers}, which 
are then connected by means of the Connecting Lemma to form the desired path $A$. 

The absorbers we shall use later will simply be $(k+1)$-paths on $2k+2$ vertices. 
When such a path $P$ appears in the neighbourhood of some vertex $x$, we have the 
liberty to insert~$x$ in the middle of $P$, thus creating a longer $(k+1)$-path.
In other words, the path~$P$ can {\it absorb} $x$. Now the plan is to construct $A$ 
so as to contain many disjoint absorbers and to make sure that for every $x\in V$ 
there will be at least $2\gamma^2n$ absorbers in $A$ capable of absorbing $x$. 

For standard reasons in the area detailed more fully below, the task of proving 
Proposition~\ref{prop:absorbing} gets thus reduced to estimating the number of 
such absorbers in $H$. This requires to deal with the interplay of the deterministic 
part $G$ and the random part $G(n, p)$ of~$H$. It turns out to be convenient to 
insist that our absorbers are entirely contained in~$G$, except for their ``middle edges'', 
which will have to be taken from $G(n, p)$. It thus becomes necessary to argue that 
$G(n, p)$ is likely to ``complete'' many $x$-absorbers for every $x\in V$ and for 
doing so we exhibit auxiliary graphs $B_x$ with $\Omega(n^2)$ edges and show that 
a.a.s.~$G(n, p)$ intersects each of them in $\Omega(n)$ edges. 

Accordingly, the proof of Proposition~\ref{prop:absorbing} consists of four steps. 
\begin{enumerate}
	\item[$\bullet$] Define for each $x\in V$ a graph $B_x$ on $V$ of size $\Omega(n^2)$
		depending only on $G$.
	\item[$\bullet$] State properties $G(n, p)$ is likely to have that will imply 
		the existence of $A$ in a deterministic sense. 
	\item[$\bullet$] Perform a random selection of $\Omega(n)$ absorbers.
	\item[$\bullet$] Connect these absorbers, thus obtaining $A$.
\end{enumerate}

\begin{proof}[Proof of Proposition~\ref{prop:absorbing}]
	We work with a hierarchy 
\[
		k^{-1}, \eps \gg \beta \gg \gamma \gg C^{-1}
	\]
and we consider an $n$-vertex graph $G=(V,E)$ with $\delta(G)\geq (\frac{k}{k+1}+\eps)n$.
	
	\paragraph{The graphs \texorpdfstring{$B_x$}{\it B(x)}}
	Let $P$ denote the $(k+1)$-path on $2k+2$ vertices $1, \ldots, {2k+2}$
	and let~$P^-$ be the graph obtained from $P$ by deleting the middle edge between ${k+1}$ and ${k+2}$.
	Notice that the chromatic number of $P^-$ is (at most) $k+1$, an admissible colouring
	being the map $\phi\colon V(P^-)\lto [k+1]$ assigning the colours 
	$1, \ldots, k+1, k+1, 1, \ldots, k$ in this order to the vertices of $P^-$, 
	i.e., explicitly  
\[
		\phi(i)=
			\begin{cases}
				i     & \text{if $1\le i\le k+1$,} \cr
				k+1   & \text{if $i=k+2$,} \cr
				i-k-2 & \text{if $k+3\le i\le 2k+2$.}
			\end{cases}
	\]

	We claim that for every vertex $x\in V$ there are at least 
\begin{equation}\label{eq:NsmR}
		\beta n^{2k+2} \text{ ordered copies of $P^-$ in } G[N(x)]\,.
	\end{equation}
This is clear for $k=0$, as in this case the graph $P^-$ has two vertices and no edges. 
	If~$k>0$ we apply Lemma~\ref{lem:scale} to $J=\{x\}$ and learn that the graph $G[N(x)]$ 
	has order at least $\frac{k}{k+1}n$ and minimum degree at least
	$(\frac{k-1}{k}+\eps)|N(x)|$. So by the Erd\H os--Stone theorem there is at least one copy of $P^-$ in 
	$G[N(x)]$ and by supersaturation 
	(see, e.g.,~\cite{ES1983}) there are indeed at least $\beta n^{2k+2}$ ordered copies 
	of $P^-$ in $G[N(x)]$, which completes the proof of~\eqref{eq:NsmR}.  
	
	Let $B_x$ be a graph on $V$ whose edges are 
	the pairs $vv'$ with the property that there are at least $\beta n^{2k}$ injective graph homomorphism 
	$\phi\colon P^-\to G[N(x)]$ with $\phi(k+1)=v$ and $\phi(k+2)=v'$.
	It follows from the discussion above that
\begin{equation}\label{eq:Bxlarge}
		e(B_x)\ge \beta n^2/2\,.
	\end{equation}

	\paragraph{Properties of \texorpdfstring{$G(n, p)$}{\it G(n, p)}} 
	We will now check that the following statements hold a.a.s.
	\begin{enumerate}[label=\rmlabel]
		\item\label{it:a1} $G(n, p)$ has at most $Cn$ edges.
		\item\label{it:a2} There are at most $2C^2n$ ordered pairs $(e, e')$ of intersecting 
			edges in $G(n, p)$.
		\item\label{it:a3} For every $R\subseteq V$ with 
			$|R|\le \gamma^2n$ and every $v\in V$ at least $\beta C n/4$ edges of  
			$B_x-R$ appear in $G(n, p)$.
	\end{enumerate}
	
	Notice that~\ref{it:a1} is straightforward by Chernoff's inequality. 
	For~\ref{it:a2} we remark that the random variable counting such pairs has expected value
	and variance~$O_C(n)$ and, therefore, Chebyshev's inequality applies. 
	Finally, for every $R$ and $x$ as in~\ref{it:a3} we have 
	$e(B_x-R)\ge \beta n^2/2-|R|n\ge \beta n^2/3$ by~\eqref{eq:Bxlarge} and $\gamma\ll \beta$.
	Thus the expected value of the number $X_{R, x}$ of edges that $G(n, p)$ and $B_x-R$ 
	have in common is at least $\beta Cn/3$. In view of Chernoff's inequality 
	(see~\cite{JLR}*{Section 2.1}) and $C\gg \beta^{-1}$
	it follows that 
\[
		\PP(X_{R, x}<\beta Cn/4)<e^{-\beta Cn/96}<4^{-n}\,.
	\]
Taking the union bound over all choices for the pair $(R, x)$ we infer that~\ref{it:a3} 
	fails with a probability of at most $n2^n\cdot 4^{-n}=o(1)$.  
	
	Having thus proved~\ref{it:a1},~\ref{it:a2}, and~\ref{it:a3} to hold a.a.s.\
	we shall henceforth regard $G(n, p)$ as a fixed graph having these properties, 
	for which, moreover, the conclusion of Proposition~\ref{prop:connect} is valid. 
	
	As we shall see, these assumptions imply 
	the existence of the desired absorbing path. Let us fix a set $R\subseteq V$ with 
	$|R|\le \gamma^2n$ from now on.
	
	\paragraph{Selection of absorbers}
	An ordered copy 
	$\seq{v}=(v_1, \ldots, v_{2k+2})\in (V\sm R)^{2k+2}$ of $P^{-}$ in $G-R$  
	with $v_{k+1}v_{k+2}\in E(G(n, p))$ is called an {\it absorber}.  
	Notice that by~\ref{it:a1} there exist at most $Cn^{2k+1}$ absorbers. 
	
	In case all vertices of an absorber $\seq{v}$ are in $N_G(x)$ for some 
	vertex $x\in V$ we say that~$\seq{v}$ is an {\it $x$-absorber}. As explained earlier, 
	the rationale behind this terminology is that if the path~$A$ we are about to 
	construct happens to contain an $x$-absorber $\seq{v}=(v_1, \ldots, v_{2k+2})$, 
	then we may replace this part of $A$ by the $(k+1)$-path 
	$(v_1, \ldots, v_{k+1}, x, v_{k+2}, \ldots, v_{2k+2})$
	whenever we wish to ``absorb'' $x$ into $A$. Later we shall refer to this option as 
	the {\it absorbing property} of~$\seq{v}$. We contend that 
\begin{equation}\label{eq:xabs}
		\text{ for every $x\in V$ there are at least 
				$\beta^2C n^{2k+1}/4$ many $x$-absorbers.}
	\end{equation}

	Notice that by~\ref{it:a3} this would follow from the fact that 
	for every edge $vv'$ that $B_x-R$ and $G(n, p)$ have in common 
	there are at least $\beta n^{2k}/2$ many $x$-absorbers having $v$ and~$v'$
	in their $(k+1)$-st and $(k+2)$-nd position, respectively. Now for 
	$vv'\in e(B_x)$ there are actually at least $\beta n^{2k}$ such configurations
	in $V$ and at most $2k|R|n^{2k-1}$ of them can fail to be $x$-absorbers
	for the reason of containing a vertex from $R$. Due to $|R|\le \gamma^2 n$ 
	and $\gamma\ll\beta$ at most $\beta n^{2k}/2$ candidates get discarded in this way,
	and thereby~\eqref{eq:xabs} is proved. 
	 
	Now let $\cF$ be a random set of absorbers containing each absorber independently
	and uniformly at random with probability $q=\gamma^{3/2}C^{-1}n^{-2k}$. 
	Since 
\[
		\EE[|\cF|]\le Cn^{2k+1}\cdot q = \gamma^{3/2} n\,, 
	\]
Markov's inequality entails
\begin{equation} \label{eq:FF1}
		\PP(|\cF|\le 3\gamma^{3/2} n) > 2/3\,.
	\end{equation}

	An ordered pair $(\seq{v}, \seq{w})$ of absorbers is said to be {\it overlapping}
	if they have a vertex in common. When two absorbers overlap, then either their 
	middle edges are disjoint or they are not. The first case appears at most 
	$(Cn)^2\cdot 4k^2n^{4k-1}$ many times by~\ref{it:a1}, while the second case appears 
	at most $8C^2n\cdot n^{4k}$ times by~\ref{it:a2}. So altogether there are at most 
	$(4k^2+8)C^2n^{4k+1}$ pairs of overlapping absorbers. Hence, the expected number of 
	overlapping pairs $(\seq{v}, \seq{w})\in \cF^2$ is at most $(4k^2+8)\gamma^{3}n$,
	and a further application of Markov's inequality yields 
\begin{equation} \label{eq:FF2}
		\PP(\text{there are at most $\gamma^{5/2}n$ overlapping pairs in $\cF^2$}) > 2/3\,.
	\end{equation}

	Since for each $x\in V$ the expected number of $x$-absorbers in $\cF$
	is by~\eqref{eq:xabs} at least $\beta^2\gamma^{3/2}n/4$, Chernoff's inequality implies	
\begin{equation} \label{eq:FF3}
		\PP(\text{there are at least $3\gamma^2n$ 
			many $x$-absorbers in $\cF$ for every $x\in V$}) > 2/3\,.
	\end{equation}

	In view of~\eqref{eq:FF1},~\eqref{eq:FF2}, and~\eqref{eq:FF3}
	there is an instance $\cF_\star$ of $\cF$ having the three properties 
	whose probabilities were just shown to be larger than $2/3$. Delete from $\cF_\star$
	all absorbers belonging to an overlapping pair and denote the resulting set of 
	absorbers by~$\cF_{\star\star}$. Notice that~$\cF_{\star\star}$ enjoys the following properties 
	\begin{enumerate}
		\item[$\bullet$] $|\cF_{\star\star}|\le 3\gamma^{3/2} n$,
		\item[$\bullet$] no two absorbers in $\cF_{\star\star}$ overlap, and
		\item[$\bullet$] for each $x\in V$ there are at least $2\gamma^2 n$
			many $x$-absorbers in $\cF_{\star\star}$.
	\end{enumerate}
		
	\paragraph{Building the absorbing path} 
	An iterative application of Proposition~\ref{prop:connect} allows us to connect 
	the members of $\cF_{\star\star}$ into a single path $A\subseteq G-R$ with
\[
		|V(A)|
		\le 
		(2k+2)|\cF_{\star\star}|+(k+1)2^{k+1}(|\cF_{\star\star}|-1)
		\le 
		\gamma n/2\,.
	\]

	In each of those applications of the Connecting Lemma, we take $K$ and $K'$ 
	to be end-sets of the two $(k+1)$-paths we wish to connect, and we let $Z$ be 
	the union of the other vertices in the path system we currently have with $R$. 
	Since at every moment the $(k+1)$-paths we are currently dealing with will  
	have at most $\gamma n/2$ vertices in total and $|R|\le \gamma^2n$,
	we will have $|Z|\le \gamma n$ in each of our $|\cF_{\star\star}|-1$
	applications of Proposition~\ref{prop:connect}, as required. 
	
	Using the absorbing property of $x$-absorbers in a greedy manner one 
	sees immediately that the $(k+1)$-path $A$ just constructed has the required property.  
\end{proof}

\section{Proof of the Covering Lemma} \label{sec:cover}

This section deals with the proof of Proposition~\ref{prop:covering}. Roughly speaking,
our strategy is as follows. By known results~\cites{EKT1987, CKO2007} the minimum degree
condition imposed on $G$ is more than enough to guarantee that we can cover essentially 
all vertices of $G'=G-Q$ with vertex-disjoint copies of the graph $K_{k+2}^-$ which arises from 
a clique of order $k+2$ by the deletion of a single edge. A standard application of the
regularity method for graphs would allow to strengthen this result so as to obtain, for any 
bounded number $m$, a covering of an overwhelming proportion of the vertices of $G'$ by 
vertex-disjoint copies of the $m$-blow-up~$K_{k+2}^-(m)$ of a $K_{k+2}^-$. Explicitly, this 
is the graph arising from a $K_{k+2}^-$ upon replacing each of its vertices $x$ by an independent 
set $V_x$ of size $m$ and each of its edges $xy$ by a complete bipartite graph 
$K_{m,m}$ joining $V_x$ and $V_y$. An important point here is that there is a tremendous amount
of flexibility in the construction of such an almost-covering of~$G'$ by copies 
of~$K_{k+2}^-(m)$. 

Now for any $K_{k+2}^-(m)$ in $G$ it may happen that an appropriate 
path on $2m$ vertices in~$G(n, p)$ augments it to a graph containing a spanning $(k+1)$-path
in $H$. Of course, for any particular $K_{k+2}^-(m)$ in $G$ this is an extremely 
unlikely event having a probability of only $o(1)$. However, owing to the aforementioned 
flexibility in the construction of an almost $K_{k+2}^-(m)$-covering of $G'$, it becomes 
asymptotically almost surely possible to ensure that we only take copies $K_{k+2}^-(m)$ 
for which such a path in $G(n, p)$ is available.   

In the two subsequent subsections we provide some of the background alluded to in the 
two foregoing paragraphs, while the proof of Proposition~\ref{prop:covering} will be given 
in Section~\ref{subsec:cov}.

\subsection{\texorpdfstring{$K_r^-$-factors}{Factors}}
For $r\ge 3$ let $K_r^-$ denote the graph obtained from the clique $K_r$ by deleting one edge. 
A $K_r^{-}$-factor of a graph $G$ is a spanning subgraph of $G$ each of whose connected 
components is isomorphic to $K_r^{-}$. It was proved by 
Enomoto, Kaneko, and Tuza~\cite{EKT1987} that every sufficiently large connected graph $G$
with $\delta(G)\ge \frac13|V(G)|$ whose number of vertices is divisible by $3$ contains a
$K_3^-$-factor.  For larger values of $r$ the tight minimum degree condition ensuring 
the existence of a $K_r^-$-factor was determined by Cooley, K\"uhn, and Osthus~\cite{CKO2007}.
By combining the results in those two references one obtains the following. 

\begin{theorem}\label{cko} 
	For every integer $r\ge 3$ there exists an integer $n_0$ such that every connected 
	graph $G$ with $n\ge n_0$ vertices, $r\mid n$, and 
\[
		\delta(G)\ge \left(1-\frac{r-1}{r(r-2)}\right)n
	\]
contains a $K_r^-$-factor.\qed
\end{theorem}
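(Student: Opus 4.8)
The statement is an amalgamation of two results already in the literature, so the plan is to split into the cases $r=3$ and $r\ge4$, check in each range that the hypotheses line up, and in particular track where the connectivity assumption is actually used.

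For $r=3$ we have $r-2=1$ and hence $\frac{r-1}{r(r-2)}=\frac23$, so the hypothesis reads $\delta(G)\ge\frac n3$ for a connected graph $G$ on $n$ vertices with $3\mid n$. This is verbatim the theorem of Enomoto, Kaneko, and Tuza~\cite{EKT1987}, and nothing more needs to be said. Here the connectivity hypothesis is genuinely necessary: for $n=6m$ the disjoint union $K_{3m+1}\dcup K_{3m-1}$ has minimum degree $3m-2>n/3$, but no copy of $K_3^-$ crosses between the two cliques and neither clique has order divisible by $3$, so there is no $K_3^-$-factor.

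For $r\ge4$ one first observes that $1-\frac{r-1}{r(r-2)}\ge\frac12$, equivalently $2(r-1)\le r(r-2)$, i.e.\ $r^2-4r+2\ge0$, which holds for all $r\ge4$. Hence $\delta(G)\ge n/2$, so $G$ is automatically connected and the connectivity hypothesis is redundant in this range. It then remains to quote the theorem of Cooley, K\"uhn, and Osthus~\cite{CKO2007} on perfect $K_r^-$-packings. Their degree threshold is most naturally expressed through the critical chromatic number of $K_r^-$; since $K_r^-$ admits a proper $(r-1)$-colouring in which $r-2$ of the colour classes are singletons, its critical chromatic number equals $(r-2)\cdot\frac r{r-1}=\frac{r(r-2)}{r-1}$, so that $1-\frac1{\chi^*(K_r^-)}=1-\frac{r-1}{r(r-2)}$ matches the bound in the statement exactly.

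The only real point that requires care is reconciling the precise form of the Cooley--K\"uhn--Osthus result with the clean expression above: one has to confirm that $K_r^-$ falls into the regime in which the threshold is governed by $\chi^*$ rather than by $\chi$ (which it does, since the smallest colour class in an optimal colouring of $K_r^-$ has size $1\ne r/(r-1)$), and that, under the divisibility hypothesis $r\mid n$ and for $n$ large, their threshold coincides exactly with $\lceil(1-\frac{r-1}{r(r-2)})n\rceil$ with no further additive correction. Modulo this bookkeeping there is no new combinatorial content, and I would expect this reconciliation of the two cited statements — together with double-checking the arithmetic that collapses both thresholds to the single displayed formula — to be the main and essentially only obstacle.
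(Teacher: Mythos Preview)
Your proposal is correct and matches the paper's approach exactly: the paper states Theorem~\ref{cko} without proof (note the \qed after the statement), merely remarking that it arises ``by combining the results in those two references,'' namely~\cite{EKT1987} for $r=3$ and~\cite{CKO2007} for $r\ge 4$. Your write-up is in fact more detailed than what the paper offers, since you verify the arithmetic that unifies the two thresholds and observe where connectivity is genuinely needed.
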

 
For the application we have in mind the following `imperfect' consequence of this result, 
where we omit the divisibility assumption on $n$ and allow a bounded number of left-over 
vertices, will be more convenient. 

\begin{corollary}\label{cor:cool} 
	For every integer $k\ge	1$ there exists $n_0\in\NN$ such that every graph $G$ 
	with~$n\ge n_0$ vertices and
\[
		\delta(G)\ge\left(1-\frac{k+1}{k(k+2)}\right)n
	\]
contains a collection of vertex disjoint copies of $K_{k+2}^-$ which together cover 
	all but at most~$(k+2)^2$ vertices of $G$.
\end{corollary}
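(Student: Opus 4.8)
The plan is to deduce the imperfect $K_{k+2}^-$-covering from the exact factor theorem (Theorem~\ref{cko}) by first discarding a bounded number of vertices so as to arrange divisibility and connectivity, and then applying the theorem to what remains. Write $r=k+2\ge 3$ and observe that the hypothesis $\delta(G)\ge\bigl(1-\tfrac{k+1}{k(k+2)}\bigr)n = \bigl(1-\tfrac{r-1}{r(r-2)}\bigr)n$ is exactly the degree threshold appearing in Theorem~\ref{cko}. A routine computation shows $1-\tfrac{r-1}{r(r-2)} > \tfrac12$ for all $r\ge 3$, so in particular $\delta(G) > n/2$; hence $G$ is connected (any two vertices have a common neighbour), and this connectivity is robust under deleting a few vertices.

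First I would pick a set $W\subseteq V(G)$ of size at most $r-1$ such that $r \mid (n-|W|)$; this is possible since one of $n, n-1,\dots, n-(r-1)$ is divisible by $r$. Set $G'=G-W$, which has $n'=n-|W|$ vertices with $r\mid n'$. Since $|W|\le r-1$ is a constant and $n$ is large, the minimum degree drops only slightly: $\delta(G')\ge \delta(G)-|W|\ge \bigl(1-\tfrac{r-1}{r(r-2)}\bigr)n - (r-1)$. The mild issue is that the required threshold for $G'$ is $\bigl(1-\tfrac{r-1}{r(r-2)}\bigr)n'$, which is slightly smaller than $\bigl(1-\tfrac{r-1}{r(r-2)}\bigr)n$ since $n'\le n$; so in fact $\delta(G')\ge \bigl(1-\tfrac{r-1}{r(r-2)}\bigr)n'$ already holds, with room to spare, once $n$ exceeds a constant depending only on $r$. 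As noted above $G'$ is still connected because $\delta(G')>n'/2$ for $n$ large. Thus Theorem~\ref{cko}, applied with this $r$ and a suitable $n_0$, yields a $K_r^-$-factor of $G'$, i.e.\ a collection of vertex-disjoint copies of $K_{k+2}^-$ covering all of $V(G')=V(G)\setminus W$, and hence all but at most $|W|\le r-1 \le (k+2)^2$ vertices of $G$.

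I do not expect a genuine obstacle here: the statement is essentially a bookkeeping wrapper around Theorem~\ref{cko}. The only point requiring a line of care is verifying that deleting the $O(1)$ vertices needed for divisibility does not destroy either connectivity or the degree hypothesis, and this follows from the slack in the inequality $1-\tfrac{r-1}{r(r-2)}>\tfrac12$ together with $n$ being large. The bound $(k+2)^2$ in the statement is deliberately wasteful — we only need $k+1=r-1$ leftover vertices — so no sharpness analysis is called for.
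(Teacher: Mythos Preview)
Your argument contains two genuine errors, both stemming from the decision to delete rather than add vertices.

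First, the claim that $1-\tfrac{r-1}{r(r-2)}>\tfrac12$ for all $r\ge 3$ is false: for $r=3$ (i.e.\ $k=1$) the left side equals $1-\tfrac23=\tfrac13$. Thus for $k=1$ the hypothesis $\delta(G)\ge n/3$ does not force connectivity --- two disjoint cliques of order $n/2$ give a disconnected example satisfying it --- yet Theorem~\ref{cko} (via Enomoto--Kaneko--Tuza) explicitly requires connectivity in this case.

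Second, the degree bookkeeping after deletion is backwards. Removing $|W|$ vertices lowers the threshold by only $\bigl(1-\tfrac{r-1}{r(r-2)}\bigr)|W|$, whereas $\delta(G')$ may drop by the full $|W|$; the shortfall $\tfrac{r-1}{r(r-2)}|W|$ is strictly positive, so there is no ``room to spare'' no matter how large $n$ is. Concretely, for $k=1$ and $n=3m+2$ you must delete two vertices; a vertex of degree exactly $m+1$ adjacent to both of them ends up with degree $m-1<m=n'/3$ in $G'$.

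The paper sidesteps both problems in one move by going the other way: it \emph{adds} $k+2-r$ new universal vertices (where here $r$ denotes the remainder of $n$ modulo $k+2$) rather than deleting any. The enlarged graph is automatically connected, and since adding a universal vertex raises $\delta$ by one while the threshold rises by only $1-\tfrac{k+1}{k(k+2)}<1$, the degree hypothesis is preserved. After applying Theorem~\ref{cko} to the enlarged graph and then discarding the auxiliary vertices, at most $k+2-r$ copies of $K_{k+2}^-$ are destroyed, leaving at most $(k+2-r)(k+2)\le (k+2)^2$ vertices uncovered --- which also explains the bound $(k+2)^2$ that looked wasteful from your point of view.
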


\begin{proof}
	We check that the number $n_0$ provided by Theorem~\ref{cko} suffices. 
    Let $r$ be the integer satisfying $0\le r\le k+1$ and $n\equiv r\pmod{k+2}$. 
    Add $k+2-r>0$ new vertices to $G$ and connect them to all other vertices (and 
    to each other). The graph thus obtained satisfies the assumptions of Theorem \ref{cko}, 
    and hence it contains a $K_{k+2}^-$-factor. When returning to $G$ we can `lose' at 
    most $k+2-r$ copies of $K_{k+2}^-$, wherefore at most~$(k+2)^2$ vertices remain 
    uncovered by the `surviving' copies of $K_{k+2}^-$.  
\end{proof}

\subsection{The graph regularity method}
Mainly in order to fix some notation we shall now state a version of Szemer\'edi's 
Regularity Lemma from \cite{Sz1978}.
For two real numbers~$\delta>0$ and $d\in [0, 1]$, a graph~$G$ and two nonempty 
disjoint sets $A, B\subseteq V(G)$, we say
that the pair $(A, B)$ is {\it $(\delta, d)$-quasirandom} if for all $X\subseteq A$
and $Y\subseteq B$ the inequality
\[
	\big|e(X, Y)-d|X||Y|\big|\le \delta |A||B|
\]
holds. The pair $(A, B)$ is {\it $\delta$-quasirandom} if it
is $(\delta, d)$-quasirandom for $d=e(A, B)/|A||B|$.

\begin{theorem}[Szemer\'edi's Regularity Lemma]\label{thm:szem}\samepage
	Given $\delta>0$ and $t_0\in\NN$ there exists an integer $T_0$ such that every graph $G=(V,E)$
	on $n\ge t_0$ vertices admits a partition
		\begin{equation*}V=V_0\dcup V_1\dcup \ldots\dcup V_t
	\end{equation*}
		of its vertex set such that
	\begin{enumerate}[label=\rmlabel]
		\item $t\in [t_0, T_0]$, $|V_0|\le \delta |V|$, and $|V_1|=\ldots=|V_t|$, and 
		\item for every $i\in [t]$ the set
			$\bigl\{j\in [t]\setminus \{i\}\colon (V_i, V_j) \text{ is 
						not $\delta$-quasirandom}\bigr\}$
			has size at most $\delta t$. \qed
	\end{enumerate}
\end{theorem}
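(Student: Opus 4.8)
The plan is the classical \emph{energy increment} argument. We may assume $\delta$ is small and $t_0$ is large in terms of $\delta$, since enlarging $t_0$ only strengthens the lower bound in~(i). For a partition $\cP=\{W_1,\dots,W_s\}$ of a subset of $V$ into nonempty classes set
\[
	q(\cP)=\sum_{1\le i<j\le s}\frac{|W_i|\,|W_j|}{n^2}\,d(W_i,W_j)^2\,,
\]
where $d(A,B)=e(A,B)/(|A|\,|B|)$, so $0\le q(\cP)<1$. Two facts about $q$ drive the proof. First, if $\cP'$ refines $\cP$ then $q(\cP')\ge q(\cP)$, and this monotonicity already holds contribution-by-contribution, i.e.\ within each pair of classes; it is just convexity of $x\mapsto x^2$. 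Second --- the one genuine computation --- if a pair $(A,B)$ is \emph{not} $\delta$-quasirandom, witnessed by sets $X\subseteq A$ and $Y\subseteq B$ with $\bigl|e(X,Y)-d(A,B)|X||Y|\bigr|>\delta|A||B|$, then replacing $A$ by $\{X,A\sm X\}$ and $B$ by $\{Y,B\sm Y\}$ raises the contribution of $(A,B)$ to $q$ by at least $\delta^{3}|A||B|/n^2$: the witness forces both $|X||Y|\ge\delta|A||B|$ and a density deviation $\ge\delta$ on the cell $(X,Y)$, and the gain equals the variance of the four new cell densities, which dominates that single term.

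Fix a sufficiently small auxiliary constant $\eta=\eta(\delta)>0$. I first aim only at the weaker conclusion that all but at most $\eta t^2$ ordered pairs $(V_i,V_j)$ of the current partition are $\eta$-quasirandom. Starting from a partition into $\lceil t_0/(1-\delta)\rceil$ roughly equal classes --- the small surplus over $t_0$ absorbing later losses --- we iterate: while more than $\eta t^2$ pairs fail to be $\eta$-quasirandom, pass to the common refinement of \emph{all} the witnessing bipartitions and then recut every class into parts of one common size, sweeping the leftovers into $V_0$. By the contribution-wise monotonicity the common refinement gains at least $\eta^{3}|V_i||V_j|/n^2$ from each irregular pair and loses nothing on the others; as all classes have size between $(1-\delta)n/t$ and $n/t$, summing over the more than $\eta t^2$ bad pairs yields a total gain that is a fixed positive multiple of $\eta^{4}$. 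Recutting is again a refinement, and the few vertices swept into $V_0$ cost only a negligible amount of $q$, so each round nets at least, say, $\eta^4/16$. Since $q<1$ the loop halts after $O(\eta^{-4})$ rounds; each round turns $s$ classes into at most $s\,4^{s}$ classes (each cut by $\le s-1$ witnesses and then into equal pieces), so the final number of classes is bounded by a tower of twos of height $O(\eta^{-4})$, which, because $t_0$ is large in terms of $\delta$, both delivers the bound $T_0$ and keeps $|V_0|\le\delta n/2$.

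It remains to upgrade this to the per-class conclusion~(ii). In the partition just obtained the number of classes with more than $\delta t$ irregular (at level $\eta$) partners is at most $\eta t^2/(\delta t)<\delta t/2$, as $\eta$ is small in terms of $\delta$. Moving the vertices of these classes into $V_0$ raises $|V_0|$ by less than $\delta n/2$, so in the end $|V_0|\le\delta n$; no re-equalizing is needed since whole classes were deleted, and $\eta$-quasirandomness of a surviving pair is untouched. Each of the $t'$ surviving classes now has at most $\delta t\le 2\delta t'$ irregular partners among the survivors, and $t'\ge(1-\delta/2)\lceil t_0/(1-\delta)\rceil\ge t_0$. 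Running the whole argument with $\delta/2$ in place of $\delta$ from the outset turns the bound $2\delta t'$ into the required $\delta t'$, which completes the proof.

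The part I expect to be the real obstacle is not any single estimate --- the increment inequality is elementary --- but the bookkeeping: keeping all classes equal in size, keeping $V_0$ genuinely small, and checking that the gains in $q$ survive both the recutting and the transfers to $V_0$, simultaneously and through every one of the $O(\eta^{-4})$ rounds. Once that machinery is in place, both the tower-type bound on $T_0$ and conclusion~(ii) drop out.
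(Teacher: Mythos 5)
The paper does not prove this theorem; it cites Szemer\'edi's original paper and appends only a two-sentence remark explaining how to pass from the more common ``at most $\delta t^2$ irregular pairs'' formulation to the per-class condition~(ii): apply the standard lemma with sharper constants $\delta'\ll\delta$, $t_0'\gg\max(t_0,\delta^{-1})$ and relocate the classes with many irregular partners into $V_0$. Your final paragraph reproduces precisely that reduction (including the Markov-type count of bad classes, the observation that removing whole classes preserves equal sizes and quasirandomness of surviving pairs, and the bookkeeping that $|V_0|$ and $t'$ still land in the required ranges), so on the one point where the paper actually says something, you are in agreement.

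The rest of your proposal is a standard energy-increment proof of the underlying ``per-pair'' regularity lemma, which the paper simply takes as known. Your increment computation is correct for this normalisation of quasirandomness: from $|e(X,Y)-d(A,B)|X||Y||>\delta|A||B|$ one gets both $|X||Y|>\delta|A||B|$ and $|d(X,Y)-d(A,B)|>\delta$, hence a mean-square gain of at least $\delta^{3}|A||B|/n^{2}$. The one place you are (admittedly) waving your hands is the round-by-round control of $|V_0|$ after re-equalising class sizes and the verification that re-equalising plus discarding to $V_0$ does not eat the $\Omega(\eta^4)$ gain; this is exactly the standard bookkeeping you flag yourself, and it goes through in the usual way by choosing the equal piece-size so that the per-round leakage into $V_0$ is geometrically small. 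No genuine gap.
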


Any partition as in Theorem \ref{thm:szem} is called \emph{$\delta$-quasirandom} or 
just \emph{quasirandom}.
In the literature one often finds other versions of the Regularity Lemma, where instead of the
second condition above one requires that at most $\delta t^2$ pairs $(V_i, V_j)$
with distinct $i, j\in [t]$ fail to be $\delta$-quasirandom. Applying such a regularity
lemma to appropriate constants $\delta'\ll \delta$ and $t_0'\gg \max(t_0, \delta^{-1})$, 
and relocating partition
classes involved in many irregular pairs to~$V_0$, one can obtain the version stated here.

Next we state the Counting Lemma accompanying Szemer\'edi's Regularity Lemma.

\begin{lemma}[Counting Lemma] \label{lem:count}
	Let $F$ be a graph with vertex set $[f]$ and let $G$ be another graph with a 
	partition $V(G)=V_1\dcup\dots\dcup V_f$ such that $(V_i,V_j)$ is $\delta$-quasirandom 
	whenever $ij\in F$. Then the number of ordered copies of $F$ in $G$, that is, the number 
	of $f$-tuples $(v_1,\dots,v_f)\in V_1\times\cdots\times V_f$ such that $v_iv_j\in G$ 
	whenever $ij\in F$, equals
\[
		\left(\prod_{ij\in F}d_{ij}\pm e(F)\delta\right)\prod_{i=1}^f|V_i|,
	\]
where $d_{ij}=\frac{e(V_i,V_j)}{|V_i||V_j|}$ is the density of $(V_i,V_j)$, which is set to $0$ in case $V_i=\emptyset$ or 
	$V_j=\emptyset$.\qed
\end{lemma}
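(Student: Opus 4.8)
The plan is to prove Lemma~\ref{lem:count} by the standard hybrid argument, interpolating between the true count of ordered copies of $F$ in $G$ and the ``independent'' prediction $\prod_{ij\in F}d_{ij}\prod_i|V_i|$ by swapping the edges of $F$ for densities one at a time. First I would dispose of the degenerate case: if some $V_i$ is empty then $\prod_i|V_i|=0$ and the count of copies is $0$ as well, so both sides vanish; hence one may assume from now on that every $V_i$ is nonempty, and then every $d_{ij}$ with $ij\in F$ is an honest density lying in $[0,1]$.

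Next I would fix an arbitrary enumeration $e_1,\dots,e_m$ of $E(F)$, where $m=e(F)$, and for $0\le s\le m$ introduce the hybrid sum
\[
	N_s=\sum_{(v_1,\dots,v_f)\in V_1\times\cdots\times V_f}\ \prod_{t\le s}\mathds{1}[v_{e_t}\in E(G)]\ \prod_{t>s}d_{e_t}\,,
\]
where for an edge $e_t=\{a,b\}$ I write $v_{e_t}$ for the pair $v_av_b$ and $d_{e_t}$ for $d_{ab}$. Here $N_0=\prod_{ij\in F}d_{ij}\prod_i|V_i|$ and $N_m$ equals the number of ordered copies of $F$ in $G$, so it suffices to establish $|N_s-N_{s-1}|\le\delta\prod_i|V_i|$ for every $s\in[m]$ and then add up the $m$ telescoping differences.

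For the telescoping step I would write $e_s=\{a,b\}$ and freeze all coordinates $v_i$ with $i\notin\{a,b\}$. The key structural observation is that $\{a,b\}$ is the only edge of $F$ meeting both $a$ and $b$, so each earlier edge $e_t$ with $t<s$ meets at most one of them; consequently the constraints coming from $e_1,\dots,e_{s-1}$ separate cleanly into a restriction of $v_a$ to some $X\subseteq V_a$, a restriction of $v_b$ to some $Y\subseteq V_b$, and a $\{0,1\}$-valued factor $\chi$ depending only on the frozen coordinates. The inner double sum over $v_a\in V_a$, $v_b\in V_b$ then collapses to $\chi\bigl(e(X,Y)-d_{ab}|X||Y|\bigr)$, which $\delta$-quasirandomness of the pair $(V_a,V_b)$ bounds in absolute value by $\delta|V_a||V_b|$. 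Using $\prod_{t>s}d_{e_t}\le 1$, $\chi\le 1$, and that there are exactly $\prod_{i\notin\{a,b\}}|V_i|$ choices for the frozen coordinates, one gets $|N_s-N_{s-1}|\le\delta\prod_i|V_i|$, and summing over $s$ yields the additive error $e(F)\delta\prod_i|V_i|$.

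I do not expect a genuine obstacle, since this is a routine computation; the one point that needs a little care is precisely the separation of the earlier-edge constraints in the telescoping step, namely checking that after freezing the remaining coordinates the quantity that appears really is of the form $e(X,Y)-d_{ab}|X||Y|$ for the single pair $(V_a,V_b)$, which is exactly what makes the $\delta$-quasirandomness hypothesis applicable verbatim.
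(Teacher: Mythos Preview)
Your argument is correct: the hybrid/telescoping proof you outline is the standard way to derive the Counting Lemma from pairwise $\delta$-quasirandomness, and the separation step you flag as delicate is indeed fine because $F$ is simple, so no earlier edge $e_t$ can have both endpoints in $\{a,b\}$.

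There is nothing to compare against, however: the paper states Lemma~\ref{lem:count} with a \qed\ and gives no proof, treating it as a well-known companion to Szemer\'edi's Regularity Lemma. Your write-up is a perfectly good justification of the omitted argument.
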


\subsection{The covering lemma} \label{subsec:cov}
 
We are now ready for the proof of the covering lemma. 

\begin{proof}[Proof of Proposition~\ref{prop:covering}]
	We begin by choosing several constants fitting into the hierarchy
\[
		k^{-1},\eps \gg \gamma \gg m^{-1}, \delta, t_0^{-1} \gg T_0^{-1} \gg \tau \gg C^{-1}
	\]
	and we consider an $n$-vertex graph $G=(V,E)$ with $\delta(G)\geq (\frac{k}{k+1}+\eps)n$.
	
	Next we describe a deterministic property the random graph $G(n, p)$ for $p=C/n$ is likely 
	to have and the remainder will then be dedicated to showing that this property 
	implies the conclusion of our Covering Lemma in a deterministic way. 
	
	For every sequence $\seq{X}=(X_1,\dots, X_{k+2})$ of disjoint subsets of $V$ 
we define a family ${\mathcal F}({\seq{X}})$ 
	of $2m$-vertex paths with vertex set $V$ as follows. 
	Consider the set of all pairs $(Y_1,Y_2)$ of $m$-sets with $Y_1\subseteq X_1$ and 
	$Y_2\subseteq X_2$ such that there are further $m$-sets $Y_i\subseteq X_i$ 
	for $i\in [3, k+2]$ such that $Y_1\dcup\ldots\dcup Y_{k+2}$ spans a copy 
	of~$K^-_{k+2}(m)$ in $G$ having all $Y_i$-$Y_{i'}$ edges for all $1\le i<i'\le k+2$ 
	with $(i, i')\ne (1, 2)$. For each such pair $(Y_1,Y_2)$ choose a spanning path 
	$P(Y_1,Y_2)$ on $Y_1\dcup Y_2$ that alternates between the two classes. 
	The family  ${\cF}({\seq{X}})$ consists of all 
	these paths taken over all choices of $(Y_1,Y_2)$ as above. 
	Finally, let 
\[
		\ccJ = \bigl\{ \seq{X}=(X_1,\dots, X_{k+2})
		\colon 
		|\cF(\seq{X})|\ge \tau n^{2m}\bigr\}\,. 
	\]

	By Janson's inequality (see Theorem~\ref{thm:janson}), a sufficiently large choice of $C$ guarantees 
\[
		\PP\bigl(\text{$P\not\subseteq G(n, p)$ for all  $P\in \cF(\seq{X})$}\bigr)
		\le
		o(2^{-(k+2)n})
	\]
for each $\seq{X}\in \ccJ$. Since $|\ccJ|\le 2^{(k+2)n}$ holds trivially, the union bound 
	informs us that the event~$\cE$ that for every $\seq{X}\in \ccJ$ there is a path 
	$P\in \cF(\seq{X})$ with $P\subseteq G(n, p)$ has probability~${1-o(1)}$.
	Henceforth we assume that $\cE$ occurs. 
	
	Applying Theorem~\ref{thm:szem}
	to $G'=G-Q$ we obtain for some $t\in [t_0, T_0]$ a $\delta$-quasirandom partition 
\[
		V\sm Q
		=
		V_0 \dcup V_1\dcup\ldots\dcup V_t
	\]
of $G'$. Let $\Gamma$ be the {\it reduced graph} with vertex set $[t]$
	defined in such a way that a pair $ij\in [t]^{(2)}$ forms an edge of $\Gamma$
	if and only if the pair $(V_i, V_j)$ is $\delta$-quasirandom  
	with density $d_{ij}=e(V_i,V_j)/|V_i||V_j|\ge \frac1{(k+1)^2}$. 
	We contend that  
\begin{equation}\label{eq:Gamma}
		\text{if } k>0, \text{ then } 
		\delta(\Gamma)\ge \left(1-\frac{k+1}{k(k+2)}\right)t\,.
	\end{equation}
For the proof of this estimate we consider an arbitrary $i\in [t]$
	and note that the minimum degree condition imposed on $G$ yields 
\[
		e(V_i, V)\ge \left(\frac{k}{k+1}+\eps\right)|V_i|n\,.
	\] 
On the other hand, it readily follows from the definitions of a $\delta$-quasirandom
	partition and~$\Gamma$ that 
\begin{align*}
		e(V_i, V) &\le e(V_i, Q\cup V_0)+\delta t |V_i|^2+ d_\Gamma(i) |V_i|^2
			 	+(t-d_\Gamma(i))\frac{|V_i|^2}{(k+1)^2} \\
		&\le (\gamma+\delta)|V_i|n+\delta |V_i|n + \frac{1}{(k+1)^2}|V_i|n
			+\frac{k(k+2)}{(k+1)^2}\cdot \frac{d_\Gamma(i)}{t}\cdot |V_i|n
	\end{align*}
Provided that $\gamma+2\delta\le \eps$ the combination of both estimates 
	yields
\[
		\frac{d_\Gamma(i)}{t}
		\ge 
		\frac{(k+1)^2}{k(k+2)}\left(\frac{k}{k+1}-\frac{1}{(k+1)^2}\right)
		=
		1-\frac{k+1}{k(k+2)}
	\]
and thereby~\eqref{eq:Gamma} is proved.
	
	Now the main work that remains to be done is to show the following statement.
	
	\begin{claim}\label{clm:K}
		If $K\subseteq V(\Gamma)$ induces a $K_{k+2}^-$ and $V_K=\bigcup_{i\in K}V_i$,
		then all but at most $\tfrac12 \gamma^2|V_K|$ vertices of $H[V_K]$ can be 
		covered by a family of vertex disjoint $(k+1)$-paths each on $(k+2)m$ vertices.
	\end{claim}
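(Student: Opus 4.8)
The plan is as follows. Relabel so that $K=\{1,\dots,k+2\}$ and the unique non-edge of the $K_{k+2}^-$ induced by $K$ in $\Gamma$ is the pair $\{1,2\}$; thus $(V_i,V_j)$ is $\delta$-quasirandom with density $d_{ij}\ge\frac1{(k+1)^2}$ whenever $\{i,j\}\subseteq K$ and $\{i,j\}\ne\{1,2\}$. Since Theorem~\ref{thm:szem} guarantees $|V_1|=\dots=|V_t|$, all clusters $V_i$ with $i\in K$ share a common size $L$, and $L\ge n/(2T_0)$ for large $n$. I would build the required family of $(k+1)$-paths greedily, one at a time, keeping the invariant that after $M$ steps precisely $Mm$ vertices have been used from each $V_i$ with $i\in K$; this automatically keeps the unused parts of these $k+2$ clusters of equal size. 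I continue as long as the current unused parts $X_i:=V_i\setminus(\text{used vertices})$ all satisfy $|X_i|\ge\frac14\gamma^2L$, so that upon stopping the total number of uncovered vertices of $V_K$ is $(k+2)|X_i|<\frac14\gamma^2(k+2)L\le\frac12\gamma^2|V_K|$. Hence everything reduces to the following: whenever $|X_i|\ge\frac14\gamma^2L$ for all $i\in K$, find one more $(k+1)$-path on $(k+2)m$ vertices inside $H$ using only vertices of $\bigcup_{i\in K}X_i$.

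A deterministic observation underlies both the construction and the definition of the families $\cF(\seq{X})$: if $Y_1\dcup\dots\dcup Y_{k+2}$ spans a copy of $K_{k+2}^-(m)$ in $G$ with missing pair $(Y_1,Y_2)$, and $P$ is a spanning path of $Y_1\cup Y_2$ alternating between the two classes, then $G[Y_1\cup\dots\cup Y_{k+2}]\cup P$ contains a spanning $(k+1)$-path on $(k+2)m$ vertices. To see this, write $P=u_0v_0u_1v_1\cdots u_{m-1}v_{m-1}$ with $u_s\in Y_1$, $v_s\in Y_2$, enumerate each $Y_j=\{w^{\,j}_0,\dots,w^{\,j}_{m-1}\}$ arbitrarily for $3\le j\le k+2$, and arrange the $(k+2)m$ vertices so that the $s$-th block of $k+2$ consecutive vertices is $u_s,v_s,w^{\,3}_s,\dots,w^{\,k+2}_s$. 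Two vertices at distance at most $k+1$ in this ordering lie in distinct classes; one checks that the pair of classes can be $\{Y_1,Y_2\}$ only for the consecutive pairs $u_sv_s$ and $v_su_{s+1}$, which are edges of $P$, while every other such pair is an edge of $G[\bigcup Y_i]$ since $K_{k+2}^-(m)$ contains all cross-class edges except between $Y_1$ and $Y_2$. Thus the ordering describes a $(k+1)$-path, and it uses exactly $m$ vertices of each $Y_i$.

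Given this, I would produce the next path as follows. Put $\seq{X}=(X_1,\dots,X_{k+2})$ and claim that $\seq{X}\in\ccJ$, i.e.\ $|\cF(\seq{X})|\ge\tau n^{2m}$. Split each $X_i$ into $m$ parts $X_i^1\dcup\dots\dcup X_i^m$ of almost equal size; since $|X_i|\ge\frac14\gamma^2L$ and $L\ge n/(2T_0)$, each part has size at least $\frac{\gamma^2}{5m}|V_i|$ for large $n$. Because subsets of a $\delta$-quasirandom pair that are not too small are themselves quasirandom with a comparable density, each pair $(X_i^a,X_j^b)$ with $\{i,j\}\ne\{1,2\}$ is $\delta'$-quasirandom with density at least $\frac1{2(k+1)^2}$, where $\delta'=O_{k,m,\gamma}(\delta)$ is as small as we please once $\delta$ is chosen small enough in terms of $k,m,\gamma$. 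Applying the Counting Lemma (Lemma~\ref{lem:count}) with $F=K_{k+2}^-(m)$, whose vertex set we identify with $\{(i,a)\colon i\in K,\ a\in[m]\}$ mapped into the parts $X_i^a$, we obtain at least $\Omega_{k,m}(1)\cdot\prod_{i,a}|X_i^a|\ge c(k,m,\gamma,T_0)\,n^{(k+2)m}$ ordered copies of $K_{k+2}^-(m)$ in $G$ with the $i$-th class inside $X_i$. Dividing by $(m!)^{k+2}$ (reorderings within classes) gives that many unordered tuples $(Y_1,\dots,Y_{k+2})$; dividing further by the at most $n^{km}$ possible completions $(Y_3,\dots,Y_{k+2})$ of a fixed pair $(Y_1,Y_2)$ yields at least $c'(k,m,\gamma,T_0)\,n^{2m}$ admissible pairs $(Y_1,Y_2)$, and hence $|\cF(\seq{X})|\ge c'n^{2m}\ge\tau n^{2m}$ provided $\tau$ was chosen small enough, which the hierarchy $T_0^{-1}\gg\tau$ permits. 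Since the event $\cE$ is assumed to hold, there is a path $P\in\cF(\seq{X})$ with $P\subseteq G(n,p)\subseteq H$; by definition $P$ is an alternating spanning path of some $Y_1\cup Y_2$ that extends to a copy of $K_{k+2}^-(m)$ in $G$ with classes $Y_1\subseteq X_1,\dots,Y_{k+2}\subseteq X_{k+2}$. The deterministic observation now produces a spanning $(k+1)$-path on $(k+2)m$ vertices inside $G[\bigcup_{i\in K}Y_i]\cup P\subseteq H[\bigcup_{i\in K}X_i]$; since $Y_i\subseteq X_i$, it consumes exactly $m$ vertices from each cluster, so we may append it to our family and proceed, completing the proof.

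I expect the main obstacle to be the middle step, namely verifying $\seq{X}\in\ccJ$: this requires pushing the graph regularity method through the blow-up $K_{k+2}^-(m)$, controlling the densities and quasirandomness parameters of the subpairs inherited from the original $\delta$-quasirandom pairs, and then counting admissible pairs $(Y_1,Y_2)$ carefully enough that the bound beats $\tau n^{2m}$ — which is precisely what forces the chain $\gamma\gg m^{-1},\delta,t_0^{-1}\gg T_0^{-1}\gg\tau$ in the constant hierarchy. The path-building observation and the greedy bookkeeping (in particular the automatic balance of the $k+2$ clusters) are comparatively routine.
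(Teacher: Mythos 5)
Your proposal is correct and follows essentially the same approach as the paper: the paper takes a maximum family $\cP$ of vertex-disjoint $(k+1)$-paths on $(k+2)m$ vertices inside $H[V_1,\dots,V_{k+2}]$ and derives a contradiction from the residue sets $X_i$ being too large by showing $\seq{X}\in\ccJ$ via the Counting Lemma on sub-pairs of the quasirandom pairs, whereas you build the family greedily and verify $\seq{X}\in\ccJ$ at each step — a purely cosmetic reorganization. Your explicit check that a spanning alternating path of $Y_1\cup Y_2$ together with the $K_{k+2}^-(m)$ yields a spanning $(k+1)$-path (by interleaving in blocks of $k+2$) makes fully precise a step the paper only asserts.
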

	
	Assuming for the moment that we already know this, the proof of 
	Proposition~\ref{prop:covering} can be completed as follows. 
	If $k\ge 1$, then by Corollary~\ref{cor:cool} and~\eqref{eq:Gamma} we know that~$\Gamma$
	contains an almost perfect $K_{k+2}^-$-factor $\cK$ covering all but at most
	$(k+2)^2$ vertices of $\Gamma$. As a $K_2^-$ is the empty graph on two vertices, 
	such a factor $\cK$ exists for $k=0$ as well. Applying Claim~\ref{clm:K} to each 
	$K_{k+2}^-$ in $\cK$ we obtain a family of vertex disjoint $(k+1)$-paths in 
	$H-Q$ covering all but at most $\bigl(\delta+\frac{(k+2)^2}{t_0}+\tfrac12 \gamma^2\bigr)n$
	vertices, and by $\delta, t_0^{-1}\ll \gamma$ this is at most $\gamma^2n$. Moreover,
	the number of these $(k+1)$-paths can be at most $\frac{n}{(k+2)m}$, which by 
	$\gamma\gg m^{-1}$ is indeed at most $\gamma^3n$.	
	
	It remains to prove Claim~\ref{clm:K}. To this end we may suppose that $V(K)=[k+2]$ 
	and that the (perhaps)
	missing edge of the $K_{k+2}^-$ is $\{1, 2\}$. Let $\cP$ be a maximum collection of 
	vertex-disjoint $(k+1)$-paths with $(k+2)m$ vertices in the $(k+2)$-partite graph 
	$H[V_1, \ldots, V_{k+2}]$.
	For each $i\in [k+2]$ let $X_i\subseteq V_i$ be the set of vertices in $V_i$ which are 
	not used by these paths. Since each path in $\cP$ needs to consist of $m$ vertices from 
	each~$V_i$, it follows that $|X_1|=\ldots=|X_{k+2}|=x$ holds for some integer $x$. 
	Now it suffices to prove $x\le \frac 12 \gamma^2|V_1|$, so assume for the sake of 
	contradiction that this fails. 

	We intend to derive $|\cF(\seq{X})|\ge \tau n^{2m}$ from the alleged largeness of $x$, 
	which will tell us that $\seq{X}\in \ccJ$.
	To this end we shall first obtain a lower bound on the number $\Omega$ of copies 
	of~$K^-_{k+2}(m)$ in $G[X_1, \ldots, X_{k+2}]$ having 
	\begin{enumerate}
		\item[$\bullet$] $m$ vertices in each $X_i$ and 
		\item[$\bullet$] all edges between the vertices in $X_i$ and $X_{i'}$ 
			for $1\le i<i'\le k+2$ with $(i, i')\ne (1, 2)$. 
	\end{enumerate}
	For each $i\in [k+2]$ let $X_i=X_{i,1}\dcup\ldots\dcup X_{i, m}$ be a partition 
	of $X_i$ into $m$ sets of size $x/m$. Now for $1\le i<i'\le k+2$ with $(i, i')\ne (1, 2)$
	we have $ii'\in E(\Gamma)$, which indicates that the pair $(V_i, V_{i'})$ is 
	$(\delta, d_{ii'})$-quasirandom in $G$ for some $d_{ii'}\in[\frac 1{(k+2)^2}, 1]$. 
	For $j, j'\in [m]$ we have $|X_{ij}|\ge \frac{\gamma^2}{2m}|V_i|$ and
	$|X_{i'j'}|\ge \frac{\gamma^2}{2m}|V_j|$ by our indirect assumption on $x=|X_i|=|X_j|$
	and thus the pair $(X_{ij}, X_{i'j'})$ is $(\delta^\star, d_{ii'})$-quasirandom
	in $G$, where $\delta^\star=\frac{4m^2\delta}{\gamma^4}$. By Lemma~\ref{lem:count} 
	applied to $F=K_{k+2}(m)$ and the vertex classes $X_{ij}$ with $i\in[k+2]$ and $j\in [m]$
	it follows that 
\[
		\Omega
		\ge 
		\left(\frac 1{(k+2)^{2e(K_{k+2}^-(m))}}-e\bigl(K_{k+2}^-(m)\bigr)\delta^\star\right)
		\left(\frac{x}{m}\right)^{m(k+2)}\,,
	\]
which by $\tau\ll \delta, m^{-1}, T_0^{-1}\ll \gamma\ll k^{-1}$ gives $\Omega\ge \tau n^{m(k+2)}$. 
	In particular, there are at least~$\tau n^{2m}$ pairs of $m$-sets $(Y_1, Y_2)$
	with $Y_1\subseteq X_1$ and $Y_2\subseteq X_2$ which can be completed to a copy 
	of~$K_{k+2}^-(m)$ in $G[X_1, \ldots, X_{k+2}]$ by appropriate further $m$-sets 
	$Y_i\subseteq X_i$ for $i\in [3, k+2]$. For these reasons, we have 
	indeed $|\cF(\seq{X})|\ge \tau n^{2m}$ and $\seq{X}\in \ccJ$.
	
	Thus the occurrence of $\cE$ supplies a path $P\in \cF(\seq{X})$ with 
	$P\subseteq G(n, p)$. For $i\in [3, k+2]$ let $Y_i\subseteq X_i$ be $m$-sets 
	witnessing $P\in \cF(\seq{X})$. Since ${V(P)\cup Y_3\cup\ldots\cup Y_{k+2}}$
	spans a $(k+1)$-path in $H[X_1, \ldots, X_{k+2}]$, we get a contradiction to the 
	maximality of the collection $\cP$ chosen earlier. 
	This concludes the proof of Claim~\ref{clm:K}
	and, hence, the proof of Proposition~\ref{prop:covering}.  
\end{proof}

\subsection*{Note added in proof}
After the present work was submitted, Nenadov and Truji\'c~\cite{NT2018} showed that Theorem~\ref{thm:main} is true even if one replaces 
$\cP^{k+1}$ by $\cP^{2k+1}$. Independently, a more general result was recently obtained by Antoniuk and the current authors~\cite{ADRRS2019}.

\begin{bibdiv}
\begin{biblist}

\bib{AKS1985}{article}{
   author={Ajtai, M.},
   author={Koml\'os, J.},
   author={Szemer\'edi, E.},
   title={First occurrence of Hamilton cycles in random graphs},
   conference={
      title={Cycles in graphs},
      address={Burnaby, B.C.},
      date={1982},
   },
   book={
      series={North-Holland Math. Stud.},
      volume={115},
      publisher={North-Holland, Amsterdam},
   },
   date={1985},
   pages={173--178},
   review={\MR{821516}},
}

\bib{ADRRS2019}{article}{
	author={Antoniuk, S.},
	author={Dudek, A.}, 
	author={Reiher, Chr.},
	author={Ruci{\'n}ski, A},
	author={Schacht, M.},
	title={More on powers of Hamiltonian cycles in randomly augmented graphs}, 
	note={In preparation},
}

\bib{BHKM2018}{article}{
	author={Bedenknecht, Wiebke},
	author={Han, J.}, 
	author={Kohayakawa, Y.},
	author={Mota, G. O.},
	title={Powers of tight Hamilton cycles in randomly perturbed hypergraphs}, 
	eprint={1802.08900},
	note={Submitted},
}

\bib{BMPP}{article}{
	author={B\"ottcher, J.},
	author={Montgomery, R.},
	author={Parczyk, O.},
	author={Person, Y.},
	title={Embedding spanning bounded degree graphs in randomly perturbed graphs},
	eprint={1802.04603},
	note={Submitted},
}

\bib{BFM2003}{article}{
   author={Bohman, Tom},
   author={Frieze, Alan},
   author={Martin, Ryan},
   title={How many random edges make a dense graph Hamiltonian?},
   journal={Random Structures Algorithms},
   volume={22},
   date={2003},
   number={1},
   pages={33--42},
   issn={1042-9832},
   review={\MR{1943857}},
}

\bib{B1984}{article}{
   author={Bollob\'as, B\'ela},
   title={The evolution of sparse graphs},
   conference={
      title={Graph theory and combinatorics},
      address={Cambridge},
      date={1983},
   },
   book={
      publisher={Academic Press, London},
   },
   date={1984},
   pages={35--57},
   review={\MR{777163}},
}

\bib{CKO2007}{article}{
   author={Cooley, Oliver},
   author={K\"uhn, Daniela},
   author={Osthus, Deryk},
   title={Perfect packings with complete graphs minus an edge},
   journal={European J. Combin.},
   volume={28},
   date={2007},
   number={8},
   pages={2143--2155},
   issn={0195-6698},
   review={\MR{2351515}},
}

\bib{D1952}{article}{
   author={Dirac, G. A.},
   title={Some theorems on abstract graphs},
   journal={Proc. London Math. Soc. (3)},
   volume={2},
   date={1952},
   pages={69--81},
   issn={0024-6115},
   review={\MR{0047308}},
}
	
\bib{EKT1987}{article}{
   author={Enomoto, H.},
   author={Kaneko, A.},
   author={Tuza, Zs.},
   title={$P_3$-factors and covering cycles in graphs of minimum degree~$n/3$},
   conference={
      title={Combinatorics},
      address={Eger},
      date={1987},
   },
   book={
      series={Colloq. Math. Soc. J\'anos Bolyai},
      volume={52},
      publisher={North-Holland, Amsterdam},
   },
   date={1988},
   pages={213--220},
   review={\MR{1221559}},
}

\bib{E1964}{article}{
   author={Erd\H os, Paul},
   title={Problem 9},
   conference={
      title={Theory of Graphs and its Applications (Proc. Sympos. Smolenice,
      1963)},
   },
   book={
      publisher={Publ. House Czechoslovak Acad. Sci., Prague},
   },
   date={1964},
   pages={85--90},
   review={\MR{0179778}},
}
	
\bib{ES1983}{article}{
   author={Erd\H os, Paul},
   author={Simonovits, Mikl\'os},
   title={Supersaturated graphs and hypergraphs},
   journal={Combinatorica},
   volume={3},
   date={1983},
   number={2},
   pages={181--192},
   issn={0209-9683},
   review={\MR{726456}},
}

\bib{HJS}{article}{
   author={Herdade, S.},
   author={Jamshed, A.},
   author={Szemer\'edi, E.},
   title={Proof of the P\'osa-Seymour Conjecture},
   note={Submitted},
}

\bib{J90}{article}{
   author={Janson, Svante},
   title={Poisson approximation for large deviations},
   journal={Random Structures Algorithms},
   volume={1},
   date={1990},
   number={2},
   pages={221--229},
   issn={1042-9832},
   review={\MR{1138428}},
   doi={10.1002/rsa.3240010209},
}

\bib{JLR-ineq}{article}{
   author={Janson, Svante},
   author={\L uczak, Tomasz},
   author={Ruci\'nski, Andrzej},
   title={An exponential bound for the probability of nonexistence of a
   specified subgraph in a random graph},
   conference={
      title={Random graphs '87},
      address={Pozna\'n},
      date={1987},
   },
   book={
      publisher={Wiley, Chichester},
   },
   date={1990},
   pages={73--87},
   review={\MR{1094125}},
}
	
\bib{JLR}{book}{
   author={Janson, Svante},
   author={\L uczak, Tomasz},
   author={Ruci\'nski, Andrzej},
   title={Random graphs},
   series={Wiley-Interscience Series in Discrete Mathematics and
   Optimization},
   publisher={Wiley-Interscience, New York},
   date={2000},
   pages={xii+333},
   isbn={0-471-17541-2},
   review={\MR{1782847}},
}
		
\bib{KSS1998}{article}{
   author={Koml\'os, J\'anos},
   author={S\'ark\"ozy, G\'abor N.},
   author={Szemer\'edi, Endre},
   title={Proof of the Seymour conjecture for large graphs},
   journal={Ann. Comb.},
   volume={2},
   date={1998},
   number={1},
   pages={43--60},
   issn={0218-0006},
   review={\MR{1682919}},
}

\bib{KSS98b}{article}{
   author={Koml\'{o}s, J\'{a}nos},
   author={S\'{a}rk\"{o}zy, G\'{a}bor N.},
   author={Szemer\'{e}di, Endre},
   title={On the P\'{o}sa-Seymour conjecture},
   journal={J. Graph Theory},
   volume={29},
   date={1998},
   number={3},
   pages={167--176},
   issn={0364-9024},
   review={\MR{1647806}},
   doi={10.1002/(SICI)1097-0118(199811)29:3<167::AID-JGT4>3.0.CO;2-O},
}

\bib{KO2012}{article}{
   author={K\"uhn, Daniela},
   author={Osthus, Deryk},
   title={On P\'osa's conjecture for random graphs},
   journal={SIAM J. Discrete Math.},
   volume={26},
   date={2012},
   number={3},
   pages={1440--1457},
   issn={0895-4801},
   review={\MR{3022146}},
}

\bib{NS2018}{article}{
	author={Nenadov, R.},
	author={N.~\v{S}kori\'{c}}, 
	title={Powers of Hamilton cycles in random graphs and tight Hamilton cycles 
	in random hypergraphs}, 
	journal={Random Structures Algorithms},
	note = {To appear},
}

\bib{NT2018}{article}{
	author={Nenadov, R.},
	author={Truji{\'c}, M.}, 
	title={Sprinkling a few random edges doubles the power}, 
	eprint={1811.09209},
	note={Submitted},
}

\bib{P1976}{article}{
   author={P\'osa, L.},
   title={Hamiltonian circuits in random graphs},
   journal={Discrete Math.},
   volume={14},
   date={1976},
   number={4},
   pages={359--364},
   issn={0012-365X},
   review={\MR{0389666}},
}

\bib{R2000}{article}{
   author={Riordan, Oliver},
   title={Spanning subgraphs of random graphs},
   journal={Combin. Probab. Comput.},
   volume={9},
   date={2000},
   number={2},
   pages={125--148},
   issn={0963-5483},
   review={\MR{1762785}},
}

\bib{rrs3}{article}{
   author={R{\"o}dl, Vojt{\v{e}}ch},
   author={Ruci{\'n}ski, Andrzej},
   author={Szemer{\'e}di, Endre},
   title={A Dirac-type theorem for 3-uniform hypergraphs},
   journal={Combin. Probab. Comput.},
   volume={15},
   date={2006},
   number={1-2},
   pages={229--251},
   issn={0963-5483},
   review={\MR{2195584 (2006j:05144)}},
   doi={10.1017/S0963548305007042},
}

\bib{S1974}{article}{
author={Seymour, Paul D.},
title={Problem Section, Problem 3},
conference={
title={Combinatorics},
address={Proc. British Combinatorial Conf., Univ. Coll. Wales,
Aberystwyth},
date={1973},
},
book={
publisher={Cambridge Univ. Press, London},
},
date={1974},
pages={201--202. London Math. Soc. Lecture Note Ser., No. 13},
review={\MR{0345829}},
}

\bib{Sz1978}{article}{
   author={Szemer{\'e}di, Endre},
   title={Regular partitions of graphs},
   language={English, with French summary},
   conference={
      title={Probl\`emes combinatoires et th\'eorie des graphes},
      address={Colloq. Internat. CNRS, Univ. Orsay, Orsay},
      date={1976},
   },
   book={
      series={Colloq. Internat. CNRS},
      volume={260},
      publisher={CNRS, Paris},
   },
   date={1978},
   pages={399--401},
   review={\MR{540024}},
}

\end{biblist}
\end{bibdiv}
\end{document}